\theoremstyle{plain}
\newtheorem{theorem}{Theorem}[section]
\newtheorem{proposition}[theorem]{Proposition}
\newtheorem{lemma}[theorem]{Lemma}
\newtheorem{corollary}[theorem]{Corollary}
\theoremstyle{definition}
\newtheorem{example}[theorem]{Example}
\newtheorem{remark}[theorem]{Remark}
\theoremstyle{remark}
\newtheorem*{acknowledgements}{Acknowledgements}
\numberwithin{equation}{section}
\def\Comment#1{{\rm\textbf{\textsf{[#1]}}}}
\def\comment#1{\Comment{#1}}
\def\comment#1{}
\def\HG{H_{G}}
\def\F{\mathbb F}
\def\Z{\mathbb Z}
\def\R{\mathbb R}
\def\k{\it k}
\def\Q{\mathbb Q}
\begin{document}

\title {Equivariant cohomology of $\mathbf{(\Z_{2})^{r}}$-manifolds and syzygies}

\author{Volker Puppe}
\address{Fachbereich Mathematik, Universität Konstanz, 78457 Konstanz, Germany}
\email{volker.puppe@uni-konstanz.de}


\subjclass[2010]{Primary 57R91; secondary 13D02, 57S25, 55M35}

\begin{abstract}
We consider closed manifolds with $(\Z_2 )^r $-action, which are obtained as intersections of products of spheres of a fixed dimension with certain `generic' hyperplanes. This class contains the real versions of the
`big polygon spaces' defined and considered by M.Franz in \cite{Franz:2014}. We calculate the equivariant cohomology with $\F_2$-coefficients, which in many examples turns out to be torsion-free but not free and realizes all orders of syzygies,
which are in concordance with the restrictions proved in \cite{AlldayFranzPuppe:manu}. ~The final results for the real versions are analogous to those for the big polynomial spaces in  \cite{Franz:2014}, where $(S^1)^r$-actions and rational coefficients are considered, but we consider also  a wider class of manifolds here and the point of view as well as the method of proof, for which it is essential to consider equivariant cohomology for divers - but related - groups, are quite different.

\end{abstract}

\maketitle

\section{Introduction}

In the papers~\cite{AlldayFranzPuppe:orbits1} and ~\cite{AlldayFranzPuppe:orbits4}
the equivariant cohomology (with coefficients in a field of characteristic $0$) of spaces equipped with an action of a torus~$T=(S^{1})^{r}$ was studied, 
in particular the relation between the so-called Atiyah--Bredon sequence
and the notion of syzygies coming from commutative algebra.
Among the results is the following theorem (see ~\cite{AlldayFranzPuppe:orbits1}, Cor.~1.4):
\begin{theorem}
Let $X$ be a compact orientable $T$-manifold. If $H^*_T(X)$ is a syzygy of order $k \geq  r/2$, then it is free over $H^*(BT)$.
\end{theorem}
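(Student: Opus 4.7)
The plan is to combine the syzygy--exactness dictionary for the Atiyah--Bredon complex with the self-duality of that complex coming from Poincaré duality on $X$.

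First, I would recall the Atiyah--Bredon complex
\[ 0 \to H^{*}_{T}(X) \to AB^{0}(X) \to AB^{1}(X) \to \cdots \to AB^{r}(X) \to 0, \]
built from the orbit-type filtration of $X$. The algebraic characterization (due to Allday--Franz--Puppe) is that $H^{*}_{T}(X)$ is a $k$-th syzygy over $R=H^{*}(BT)$ precisely when this complex is exact at its first $k$ terms (counting $H^{*}_{T}(X)$ itself as the zeroth), and that being free over $R$ is equivalent to exactness throughout.

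Next, I would invoke equivariant Poincaré--Lefschetz duality. Since $X$ is a compact orientable $T$-manifold, so is every subtorus-fixed-point set $X^{K}$ entering the construction of $AB^{i}(X)$, and the equivariant duality of these strata induces an isomorphism $AB^{i}(X) \cong AB^{r-i}(X)$, up to a uniform degree shift by $n=\dim X$, that is compatible with the connecting differentials. Thus the whole Atiyah--Bredon complex is, up to shift and reversal of arrows, isomorphic to itself, so exactness at a term in the first half forces exactness at the mirror term in the second half.

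The theorem then follows by a counting argument. If $H^{*}_{T}(X)$ is a syzygy of order $k \geq r/2$, the complex is exact at its first $k$ terms; by the Poincaré-duality symmetry, it is also exact at its last $k$ terms. The inequality $k \geq r/2$ ensures that these two ranges together cover every term, so the Atiyah--Bredon complex is exact everywhere and $H^{*}_{T}(X)$ is free over $R$.

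I expect the main obstacle to be setting up the self-duality cleanly at the level of the whole complex: it is not merely a matter of dualising each term but also of verifying that the duality commutes, up to sign, with the differentials, which requires a careful identification of $AB^{i}(X)$ with compactly supported equivariant cohomology of the orbit-type strata and a compatibility check between Poincaré--Lefschetz duality on these strata and the Atiyah--Bredon connecting maps. Once that compatibility is established, the passage from the hypothesis $k \geq r/2$ to freeness is essentially formal.
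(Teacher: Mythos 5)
The paper does not prove this theorem itself; it simply quotes \cite{AlldayFranzPuppe:orbits1}, Cor.~1.4, whose argument rests on the Atiyah--Bredon complex together with the equivariant Poincaré--Alexander--Lefschetz duality developed in \cite{AlldayFranzPuppe:orbits4}. Your outline --- Atiyah--Bredon complex, syzygy order as the length of an exactness range, a Poincaré-duality-induced symmetry, and a counting argument --- has the right architecture and matches the cited proof in spirit. But the duality step as you formulate it is wrong. You assert a term-by-term isomorphism $AB^{i}(X)\cong AB^{r-i}(X)$ (up to a degree shift) compatible with the differentials, so that the complex is isomorphic to its own reversal. This cannot hold: $AB^{i}(X)=H^{*+i}_{T}(X_{i},X_{i-1})$ is supported on the stratum of $i$-dimensional orbits, which bears no geometric relation to the $(r-i)$-dimensional one. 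Already the $R$-module structures rule it out in general: $AB^{0}(X)=H^{*}_{T}(X^{T})$ is a \emph{free} $R$-module, while $AB^{r}(X)=H^{*+r}_{T}(X,X_{r-1})$ is supported on the free-orbit locus and is $R$-\emph{torsion}, so $AB^{0}\not\cong AB^{r}$ except in degenerate cases.

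What equivariant Poincaré--Alexander--Lefschetz duality actually provides (and this is the content of \cite{AlldayFranzPuppe:orbits4}) is a symmetry at the level of the \emph{cohomology} of the Atiyah--Bredon complex, not of the complex itself. Very roughly, $H^{i}(AB^{*}(X))$ is identified with appropriate $\operatorname{Ext}_{R}$-modules built from equivariant (co)homology, and the equivariant Poincaré isomorphism $H^{T}_{*}(X)\cong H^{*-n}_{T}(X)$ converts the syzygy hypothesis (vanishing in low positions) into vanishing in high positions. Establishing this derived self-duality is a substantial theorem and not a formal consequence of applying Poincaré--Lefschetz duality stratum by stratum; it is also what makes the precise bound come out. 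Even granting a symmetry, your termwise count needs care: the complex has $r+2$ nonzero terms, so ``exact in the first $k$ and the last $k$ positions'' covers everything only when $2k\ge r+2$, and the sharper bound $k\ge r/2$ depends on the exact index shift in the duality and on a priori constraints at the two end positions. So the strategy is sound, but the key duality input is misstated, and replacing it by the correct derived statement is where essentially all of the work of \cite{AlldayFranzPuppe:orbits4} lies.
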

In~\cite{Franz:2014} examples of $T=(S^{1})^{r}$-manifolds
were given, which show that the restriction on the order of syzygies obtained in ~\cite{AlldayFranzPuppe:orbits1},
are sharp. 
Coefficients were taken in $\Q$.

In this note we consider actions of a $2$-torus $G=(\Z_2)^{r}$ and coefficients in the field $\F_2$ of characteristic $2$.
All major analogous results of~\cite{AlldayFranzPuppe:orbits1} and~\cite{AlldayFranzPuppe:orbits4}, in particular Theorem~ 1.1 above,
turn out to be true in this setting. Nevertheless, some of them require new methods of proof, basically because, 
in contrast to $T=(S^{1})^{r}$, 
$G=(\Z_2)^{r}$ has only finitely many
subgroups and because the field~$\F_2$ has only finitely many elements.
This is carried out in a so far unpublished manuscript ~\cite{AlldayFranzPuppe:manu}, even for the not quite analogous case of 
$G=(\Z_{p})^{r}$-actions and  coefficients in a field $\k$ of characteristic~$p>0$, $p$ an odd prime, 
which is somewhat more involved than the $p=2$ case.

The results in this note for the real versions of the 'big polygon spaces' with $(\Z_2)^r$-actions  are in a 
sense analogous to those for $(S^1)^r$-actions in ~\cite{Franz:2014},
i.e. among other results we show (see Cor.~3.16, Remark ~3.17, and compare ~\cite{Franz:2014} , Cor.~5.3 for the case of $(S^1)^r$-actions):
\begin{theorem}
Let $k$ and $r$ be integers with $k < r/2$, then there exists a compact $(\Z_2)^r$-manifold, $N_0$, such that $H^*_{(\Z_2)^r} (N_0;\F_2)$
is a $k$-th syzygy over $H^*(B(\Z_2)^r;\F_2)$ but not a $(k+1)$-th syzygy.
\end{theorem}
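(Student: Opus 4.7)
The plan is to exhibit an explicit compact $(\Z_2)^r$-manifold $N_0$ within the class of intersections of products of spheres with generic hyperplanes treated in Section~3 — playing over $\F_2$ the role that Franz's big polygon spaces play over $\Q$ in~\cite{Franz:2014} — and then to verify directly that $H^*_G(N_0;\F_2)$ is a $k$-th but not a $(k+1)$-th syzygy over $H^*(BG;\F_2) = \F_2[t_1,\dots,t_r]$.

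First I would fix $N_0$ as the real analogue of the appropriate big polygon space, tailored to the pair $(k, r)$. Concretely, $N_0$ is cut out of a suitable product $(S^n)^m$ by linear conditions chosen generically enough to keep the intersection smooth and to pin down the isotropy stratification in the desired way; the integers $m, n$ and the defining hyperplanes are the $\F_2$-translation of those used by Franz, with odd-dimensional complex spheres replaced by real spheres of a fixed dimension, so that all equivariant cohomology classes sit in degrees compatible with the polynomial ring $H^*(BG;\F_2)$.

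Second I would invoke the description of $H^*_G(N_0;\F_2)$ developed earlier in Section~3: for manifolds in this class, the equivariant cohomology admits an explicit presentation over $\F_2[t_1,\dots,t_r]$ that encodes the combinatorics of the hyperplane cut. From this presentation one verifies, for the lower bound ($k$-th syzygy), that the first $k$ positions of the Atiyah--Bredon-type complex associated with $N_0$ are exact, using that the low-rank isotropy strata contribute freely; for the upper bound (not a $(k+1)$-th syzygy), one exhibits a specific failure of exactness at the $(k+1)$-st position of the same complex, or equivalently a non-vanishing $\operatorname{Ext}$ obstruction over $\F_2[t_1,\dots,t_r]$, forced by an orbit stratum whose isotropy type the genericity of the hyperplanes cannot remove.

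The main obstacle is the simultaneous calibration required in the first step: the defining hyperplanes must be generic enough to force non-freeness at precisely the correct position, yet structured enough that no accidental pathology at lower positions drops the syzygy order below $k$. I expect this balance to be attainable by essentially Franz's combinatorial recipe, so that the substance of the argument lies in verifying that each exactness and non-exactness statement survives the transition from $\Q$, $(S^1)^r$ to $\F_2$, $(\Z_2)^r$ — precisely the point at which finiteness of $G$ and of the coefficient field forces new proofs, as emphasised in the introduction.
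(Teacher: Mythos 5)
Your proposal correctly identifies the family of examples (real big polygon spaces $N_0 = M_m$ cut from $(S^{m+n-1})^r$) and correctly points to the explicit presentation of $H^*_G(N_0;\F_2)$ developed in Section~3 as the input. But the core of the paper's argument is missing from your plan, and what you substitute for it would not go through as described.

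The paper does \emph{not} verify the syzygy order by checking exactness in an Atiyah--Bredon complex or by computing $\operatorname{Ext}$ obstructions tied to isotropy strata. Instead it fixes the very specific data $m=2$, $n=1$, $r=2k+1$, $\ell=(1,\dots,1)$ (Example~\ref{SYZ}); this is not a ``delicate calibration of generic hyperplanes'' as you suggest, but the most degenerate admissible choice, and a generic perturbation of it would typically \emph{lower} the syzygy order (cf.~Proposition~\ref{maximal}). The paper then computes $H^*_{\tilde G}(N_0)$ for the enlarged group $\tilde G=\Z_2\times G$ via the Mayer--Vietoris decomposition (Theorems~\ref{HM_1}, \ref{HMG_1}, \ref{HMG_2}), passes to $G$ via the \emph{split} universal coefficient sequence of Proposition~\ref{HN}, and — this is the decisive step you omit — identifies the relevant component of the map $\iota$ with the Koszul boundary $\delta\colon\Lambda^{k+1}_R\to\Lambda^k_R$. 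This yields $H^*_G(N_0)\cong\mathrm{(free)}\oplus\operatorname{coker}\delta\oplus\ker\delta$ as $R$-modules, and the syzygy order ($k$-th but not $(k+1)$-th) is then read off from the standard homological-dimension count already recorded after~(\ref{eq:koszul-resolution}). No Atiyah--Bredon argument is needed; the obstruction to being a $(k+1)$-th syzygy is the purely algebraic fact $\operatorname{hdim}_R(\operatorname{coker}\delta)=r-k > r-(k+1)$, not an orbit-stratum phenomenon. Finally, you do not address how to get from the odd-rank case $r=2k+1$ to an arbitrary $r>2k$: the paper handles this (Remark~\ref{SO}) by extending the length vector with zero entries, which enlarges the acting $2$-torus without changing the syzygy order — a short but essential final step.

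In short: the ``which manifold'' part of your plan is right, but the ``how to compute the syzygy order'' part is not the paper's method and leaves the hard work — the Koszul identification and the use of the auxiliary $\tilde G$-action with the splitting of Proposition~\ref{HN} — unaddressed. As written your plan would reduce a clean module computation to a considerably harder exactness check without the tools to carry it out.
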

Compared to ~\cite{Franz:2014} we take a different point of view and the proofs are also quite different.
We consider the equivariant cohomology for divers groups acting on a class of manifolds, which contains the real analogues of the `big polygon spaces`of M.Franz, but also the more general 'big chain spaces` (see ~Remark \ref{spaces}, (3) and (4), Remark~3.10 and Cor.~3.13) which are not considered in  ~\cite{Franz:2014}. \\
Certain familiarity with equivariant cohomology and P.A.~Smith-Theory is assumed throughout.
Standard references are e.g. ~\cite{Borel},~\cite{Bredon},~\cite{AlldayPuppe:1993}.
\begin{acknowledgements}
This note is based on joint work with C.Allday and M.Franz and numerous discussions with M.Franz. 
Some of the methods applied go back to a conversation with J.-C.Hausmann and M.Farber in 2007 about 
equivariant aspects of the Walker conjecture. I also want to thank Matthias Franz
for helpful comments and support to create an acceptable LateX file.
\end{acknowledgements}
\section{Some basic definitions and fundamental results}
We will not cite here all analogous results to those in ~\cite{AlldayFranzPuppe:orbits1} and ~\cite{AlldayFranzPuppe:orbits4}
for the case of 2-tori and $\F_2$-coefficients
but concentrate on the result which is relevant in view of the later examples.

Let $G=(\Z_2)^{r}$ be a $2$-torus and
 $ R = H^{*}(BG) =\F_2[x_{1},\dots,x_{r}]$
the polynomial ring in the variables $x_{1},\dots,x_{r}$ of degree~$1$.

We recall the notion of syzygies from commutative algebra, see e.g.~\cite{BrunsVetter:1988}. 
A finitely generated $R$-module~$M$ is called a \emph{$j$-th syzygy}
if there is an exact sequence
\begin{equation}
0\to M\to F^{1}\to \dots \to F^{j}
\end{equation}
with finitely generated free $R$-modules~$F^{1}$,~\ldots,~$F^{j}$.
The first syzygies are exactly the torsion-free $R$-modules,
and the $j$-th syzygies with~$j\ge r$ are the free modules.

An easy way
to obtain syzygies over the polynomial ring $R$ is to use the Koszul resolution
\begin{equation}
\label{eq:koszul-resolution} 0 \longrightarrow
  R \stackrel{\delta_{r}}\longrightarrow
  R^{\binom{r}{r-1}} \longrightarrow
\cdots \longrightarrow R^{\binom{r}{1}} \stackrel{\delta_{1}}\longrightarrow R \stackrel{\delta_{0}}\longrightarrow \F_2  \longrightarrow 0 ~, 
\end{equation}
indeed, the image of~$\delta_{j}$, $K_j$, is obviously a $j$-th syzygy by definition, but it is not a $(j+1)$-th syzygy, because the homological dimension over $R$, $hdim_R(M)$, of a $(j+1)$-th syzygy $M$ over $R$ is at most $r-(j+1)$, while $hdim_R(K_j) = r-j$.

In ~\cite[Prop.]{Allday:1985} Allday proves for rational coefficients the following result
for a suitable Poincaré duality ($PD-$ for short) space~$X$ on which $T = (S^1)^r$ acts,
e.g. a compact orientable T-manifold.
If $H_T^{*}(X)$ has homological dimension $1$,
then it has $H^*(BT)$-torsion.
In particular, if~$r=2$, i.e., if $T=S^{1}\times S^{1}$,
then $H^*_T (X)$ is torsion-free if and only if it is free.\\
Analogous results hold for $(\Z_2)^r$  instead of $T$,
and $\F_2$-coefficients.
But the above equivalence breaks down for~$r>2$;
see ~\cite{FranzPuppe:2008} for counterexamples.
The correct generalization of Allday's result is as follows.

\begin{proposition}
\label{thm:torsion-free-hd-1} Let X be a PD-space with a $G$-action, which is a compact $G-CW$ complex, e.g.
a compact orientable G-manifold.\\
If $\HG^{*}(X)$ is a $k$-th syzygy for some $k~\geq r/2 $,
then it is free over $H^*(BG)$.

\end{proposition}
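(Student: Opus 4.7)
The plan is to follow the blueprint of the proof of Theorem~1.1 for tori from \cite{AlldayFranzPuppe:orbits1}, transplanted to $G=(\Z_2)^r$ and $\F_2$-coefficients. The central object is the Atiyah--Bredon complex. Filter $X$ by orbit type, $X_i = \{x\in X : \operatorname{rk}(G_x)\ge r-i\}$ for $i=0,\dots,r$, and splice the long exact sequences of the pairs $(X_i,X_{i-1})$ in Borel equivariant cohomology to obtain a complex
\begin{equation*}
AB^\bullet(X)\colon 0\to \HG^{*}(X)\to \HG^{*}(X_0)\to \HG^{*+1}(X_1,X_0)\to \cdots \to \HG^{*+r}(X_r,X_{r-1})\to 0,
\end{equation*}
whose term in position $i\ge 0$ is $R$-supported on the primes of $R$ corresponding to subgroups of rank $r-i$.

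The first half of the proof is the analog of the syzygy characterization from \cite{AlldayFranzPuppe:orbits1} (stated in the $(\Z_p)^r$ setting in \cite{AlldayFranzPuppe:manu}): $\HG^{*}(X)$ is a $k$-th syzygy over $R=H^{*}(BG)$ precisely when $AB^\bullet(X)$ is exact in positions $-1,0,\dots,k-2$. The second half is the production of a self-duality on $AB^\bullet(X)$. Since $X$ is a PD-space and $G=(\Z_2)^r$, Smith theory guarantees that each fixed-point set $X^H$ and hence each stratum $X_i\setminus X_{i-1}$ carries $\F_2$-Poincaré--Lefschetz duality; assembling these stratum-wise dualities compatibly with the connecting maps of the filtration yields an isomorphism of $R$-complexes (up to a degree shift) $AB^\bullet(X)\cong \operatorname{Hom}_R(AB^{r-\bullet}(X),R)^{\text{shift}}$. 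Combining the two halves, if $k\ge r/2$ then the hypothesis gives exactness in positions $\le k-2$, the self-duality reflects this to exactness in positions $\ge r-k$, and since $r-k\le k$ these overlap and cover every position. An everywhere-exact $AB^\bullet(X)$ is equivalent, via the characterization again, to $\HG^{*}(X)$ being an $r$-th syzygy, i.e.\ free.

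The main obstacle is setting up the self-duality of $AB^\bullet(X)$ honestly in this finite-coefficient setting. In the torus case one can localize $R=\Q[t_1,\dots,t_r]$ at primes corresponding to subtori and carry out much of the bookkeeping at generic residue fields; here $R=\F_2[x_1,\dots,x_r]$ has only finitely many primes of each rank and every residue field is $\F_2$, so this shortcut is unavailable. One must instead construct the duality pairing directly at the level of $R$-modules, checking that the Poincaré--Lefschetz pairings on successive strata are compatible with the boundary maps arising from the filtration, and handling carefully the free residual action of $G/G_x$ on $X\setminus X^{G_x}$ in characteristic $2$. Once this compatibility is in place, the combinatorial/homological argument in the last paragraph is short; the duality is the genuinely new input beyond the formalism already in place for tori.
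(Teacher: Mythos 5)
The paper does not actually prove Proposition~2.3; its ``proof'' consists entirely of citations to \cite{AlldayFranzPuppe:orbits1}, Prop.~5.12(2) (for the $(S^1)^r$/$\Q$ case) and to the unpublished manuscript \cite{AlldayFranzPuppe:manu} (for the $(\Z_2)^r$/$\F_2$ case). Your sketch is a faithful outline of the strategy in those references: encode the syzygy order of $\HG^*(X)$ as partial exactness of the Atiyah--Bredon complex, and then use equivariant Poincar\'e--Alexander--Lefschetz duality (the subject of \cite{AlldayFranzPuppe:orbits4}) to reflect exactness from the low-index positions to the high-index ones, forcing exactness everywhere when $k\geq r/2$. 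So you are taking essentially the same approach the paper cites --- there is nothing to compare against within the paper itself.

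Two cautions about where your outline stops short of a proof. First, the duality you invoke is \emph{not} an $R$-linear isomorphism $AB^\bullet(X)\cong \operatorname{Hom}_R(AB^{r-\bullet}(X),R)$ at the level of complexes; the actual statement in \cite{AlldayFranzPuppe:orbits4} relates the cohomology of the Atiyah--Bredon complex to $\operatorname{Ext}$-modules of $\HG^*(X)$ over $R$ (equivalently, to a homological Atiyah--Bredon complex built from equivariant Borel--Moore homology), and the degree bookkeeping needed to make the two ranges of exactness meet exactly at $k\geq r/2$ --- rather than at a strictly larger cutoff --- comes from that finer statement, not from a naive reflection of indices. Second, as you correctly flag, the construction of that duality in characteristic $2$ with $G$ a $2$-torus is the genuine new content: the torus proof passes through localizations at generic primes and infinite isotropy lattices, neither of which is available here, and no version of the needed $\F_2$-duality is established in your sketch. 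That gap is precisely what \cite{AlldayFranzPuppe:manu} is cited for, so the proposal correctly identifies the missing ingredient but does not supply it.
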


\begin{proof}
Compare ~\cite{AlldayFranzPuppe:orbits1}, Proposition 5.12(2) for the result in case of $G = (S^1)^r$-actions and rational coefficients.
A proof for the case $G = (\Z_2)^r$ and $\F_2$-coefficients is contained in ~\cite{AlldayFranzPuppe:manu}. 
\end{proof}

While in ~\cite{AlldayFranzPuppe:orbits1}
it was shown by examples that
actually all orders of syzygies can occur as equivariant cohomology modules of non-compact $G$-manifolds; examples of compact $G$-manifolds or $PD$-spaces which realize all orders of syzygies $< r/2$  were not given there. This is done
in ~\cite{Franz:2014} for the case of  $(S^1)^r$-actions. Here we give similar results, but essentially different proofs for $G$-actions, where  $G=(\Z_{2})^{r}$.
\\
From now on we always take $\F_2$-coefficients.
A $G$-space is \emph{equivariantly  formal} in the sense of ~\cite{GKM} if and only if the equivariant cohomology, $H^*_G(X)$ is isomorphic to $H^*(X) \otimes R$ as an $R$-module (but not necessarily as an $R$-algebra). 
We denote this property be  $\it{CEF}$  (\emph{cohomologically equivariantly formal})  to distinguish it from notions of formality in rational homotopy theory.\\

The following Mayer-Vietoris type theorem is basic for our calculations.

\begin{theorem}
\label{MV}
Let $M$ be a $G$-manifold, $G$ a $2$-torus. Assume that $M^+,M^- \subset M$ are $G$-invariant,  with
$M=M^+ \cup M^-$, such 
that $M,M^+,M^-$ are $\it {CEF}$, and the action on $M^0:=M^+\cap M^-$ is fixed point free.\\ 
Assume also that the maps 
$H^*(M)\longrightarrow H^*(M^{\pm})$ induced by the inclusions are surjective. 
Then one has the following Mayer-Vietoris diagram:

\begin{equation}
\begin{tikzcd}
H^*_{G}(M) \arrow{r} 
	  \arrow{d} 
	& H^*_{G}(M^+) \arrow {d}\\
    H^*_{G}(M^-) \arrow {r}  
 & H^*_{G}(M^0)
\end{tikzcd}
\end{equation}
\\
All maps in the above diagram are surjective and the long exact Mayer-Vietoris sequence decomposes into short exact sequences

\begin{equation}
0 \longrightarrow {H_G^{*}(M)} \stackrel {(\xi ^+ ,\xi^-)} \longrightarrow {H_G^{*}(M^+)} \oplus {H_G^{*}(M^-)} 
\longrightarrow {H_G^{*}(M^0)} \longrightarrow 0
\end{equation}
one has
\begin{equation}
{H_G^{*}(M^0)} \cong {H_G^{*}(M)}/{({ker \xi^+}\oplus {ker \xi^-})} \cong ({H_G^{*}(M^+)}\oplus {H_G^{*}(M^-)})/{(\xi^+,\xi^-){H_G^{*}(M)}}.
\end{equation}
\end{theorem}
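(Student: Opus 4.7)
The plan is to start from the standard equivariant Mayer--Vietoris long exact sequence associated to the cover $M=M^+\cup M^-$ (applied to suitable open $G$-neighborhoods, or to the Borel construction),
\[
\cdots\to H_G^{*}(M)\xrightarrow{(\xi^+,\xi^-)} H_G^{*}(M^+)\oplus H_G^{*}(M^-)\xrightarrow{\eta^+-\eta^-} H_G^{*}(M^0)\xrightarrow{\partial} H_G^{*+1}(M)\to\cdots
\]
and to show the connecting map $\partial$ vanishes, after which everything else follows by diagram chasing.

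The first key step is to observe that $H_G^{*}(M^0)$ is an $R$-torsion module. Indeed, the action of $G$ on $M^0$ is fixed-point free by hypothesis, so the Borel--Atiyah--Quillen--Hsiang localization theorem for $2$-tori with $\F_2$-coefficients forces the localization of $H_G^{*}(M^0)$ at the multiplicative set $R\setminus\{0\}$ (or at the product of the nontrivial characters) to be zero. On the other hand, the $\it CEF$ hypothesis on $M$ gives $H_G^{*}(M)\cong H^{*}(M)\otimes R$ as an $R$-module, which is free. Since any $R$-module homomorphism from a torsion module into a free module is zero, $\partial=0$. This breaks the long exact sequence into the short exact sequences (2.4).

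The surjectivity of $\xi^\pm$ follows directly from $\it CEF$: the $\it CEF$ property gives module isomorphisms $H_G^{*}(M)\cong H^{*}(M)\otimes R$ and $H_G^{*}(M^{\pm})\cong H^{*}(M^{\pm})\otimes R$, and the map $\xi^\pm$ is identified with $(j^\pm)^{*}\otimes\mathrm{id}_R$, which is surjective because $(j^\pm)^{*}\colon H^{*}(M)\to H^{*}(M^\pm)$ is surjective by hypothesis. The surjectivity of $\eta^\pm\colon H_G^{*}(M^\pm)\to H_G^{*}(M^0)$ is then a short diagram chase: given $\alpha\in H_G^{*}(M^0)$, write $\alpha=\eta^+(\beta^+)+\eta^-(\beta^-)$ by the Mayer--Vietoris surjection (working mod $2$ makes signs irrelevant); lift $\beta^-=\xi^-(\gamma)$ using surjectivity of $\xi^-$ and use the commutation $\eta^-\xi^-=\eta^+\xi^+$ to replace $\eta^-(\beta^-)$ by $\eta^+\xi^+(\gamma)$, so $\alpha=\eta^+(\beta^++\xi^+(\gamma))$.

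For the two quotient descriptions (2.5): the right-hand isomorphism is simply the cokernel form of the short exact sequence (2.4). For the left-hand one, consider the surjection $\eta^+\xi^+\colon H_G^{*}(M)\twoheadrightarrow H_G^{*}(M^0)$; clearly $\ker\xi^++\ker\xi^-$ sits inside its kernel, and conversely if $\eta^+\xi^+(\gamma)=0$ then $(\xi^+(\gamma),0)$ lies in $\ker(\eta^+-\eta^-)=\operatorname{image}(\xi^+,\xi^-)$ by exactness, producing a $\delta\in H_G^{*}(M)$ with $\xi^+(\delta)=\xi^+(\gamma)$ and $\xi^-(\delta)=0$; then $\gamma=(\gamma-\delta)+\delta$ realizes the required decomposition. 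Injectivity of $(\xi^+,\xi^-)$ from (2.4) gives $\ker\xi^+\cap\ker\xi^-=0$, justifying the direct-sum notation. The main obstacle is the vanishing of $\partial$, i.e.\ identifying $H_G^{*}(M^0)$ as torsion; once this is in place, the rest is formal.
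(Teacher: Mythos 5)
Your proof is correct and reaches the same conclusions, but the pivotal step differs from the paper's. You break the long exact sequence by showing the connecting map $\partial\colon H^*_G(M^0)\to H^{*+1}_G(M)$ vanishes, arguing that $H^*_G(M^0)$ is $R$-torsion (localization, since $(M^0)^G=\emptyset$) while $H^*_G(M)$ is $R$-free (CEF), and any $R$-module map from a torsion module into a free module is zero. The paper instead proves that $(\xi^+,\xi^-)$ is injective directly: since $(M^0)^G=\emptyset$, one has $M^G=(M^+)^G\sqcup(M^-)^G$, so the injective restriction $H^*_G(M)\to H^*_G(M^G)$ (injective because $H^*_G(M)$ is free, hence torsion-free, and the kernel of restriction to fixed points is torsion) factors through $(\xi^+,\xi^-)$. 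Both arguments ultimately rest on the same localization-theoretic input and the CEF hypothesis on $M$, and by exactness they are equivalent; yours is perhaps a touch more economical, while the paper's makes the role of the fixed-point sets explicit. One point you should tighten: you assert that $\xi^\pm$ ``is identified with $(j^\pm)^*\otimes\mathrm{id}_R$'' under the CEF isomorphisms. That is not correct in general --- the CEF isomorphism $H^*_G(X)\cong H^*(X)\otimes R$ is of $R$-modules only and need not intertwine the equivariant and non-equivariant restriction maps, a subtlety the paper flags explicitly. Surjectivity of $\xi^\pm$ nonetheless follows because the associated graded of $\xi^\pm$ with respect to the degree filtration is $(j^\pm)^*\otimes\mathrm{id}_R$, and surjectivity on the associated graded implies surjectivity (the paper refers to \cite{AlldayPuppe:1993}, Lemma (A.7.3)(2) for exactly this). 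Your diagram chases for surjectivity of $\eta^\pm$ and for the two quotient descriptions in (2.5) are correct, and your observation that $\ker\xi^+\cap\ker\xi^-=0$ justifies the internal direct sum.
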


\begin{proof} Since $M,M^\pm$ are $\it {CEF}$, one has ${H_G^{*}(M)} \cong  {H^{*}(M)}\otimes R $
and ${H_G^{*}(M^\pm)}\cong {H^{*}(M^\pm)}\otimes R$ as $R$-modules.
Although the maps in equivariant cohomology might not be the canonical extensions of the 
corresponding maps in non-equivariant cohomology, still the former are surjective because the latter are so 
by assumption (see e.g. ~\cite{AlldayPuppe:1993} Lemma (A.7.3)(2)) .
On the other hand, because the fixed point set $(M^0)^G$ is empty, one has
\begin{equation}
H^*_G(M^G) \cong H^*_G((M^+)^G) \oplus H^*_G((M^-)^G).
\end{equation}
Since the inclusions of the fixed point set 
of $M$ induces  an injection in equivariant cohomology, the map 
$(\xi^+,\xi^-): H^*_G(M) \longrightarrow H^*_G(M^+) \oplus H^*_G(M^-)$
is also injective, for the composition with 
$H^*_G(M^+) \oplus H^*_G(M^-) \longrightarrow 
H^*_G((M^+)^G)\oplus H^*_G((M^-)^G)$ \\ coincides with the injective map
\begin{equation} 
H^*_G(M) \longrightarrow H^*_G(M^G) \cong H^*_G((M^+)^G)\oplus H^*_G((M^-)^G).
\end{equation}
This means that the long exact Mayer-Vietoris sequence decomposes into short exact sequences and the above Mayer-Vietoris diagram is cocartesian. Hence together with the maps on the upper and left side of the diagram also those on the lower and right side are surjective.
\end{proof}

\begin{remark}
Under the conditions of Theorem ~\ref{MV}
the sequence (2.4) is a free resolution of the $R$-module $H^*_G(M^0)$ since  $H^*_G(M)$ and $H^*_G(M^{\pm})$ are free $R$-modules.
Since $\xi ^{\pm}$ are surjective, $ker \xi ^{\pm}$ are also free, and $ker \xi ^+ \cap ker \xi^- = 0$ 
since $(\xi ^+,\xi ^-)$ is injective.
Hence the following sequence is also a free resolution of $ H^*_G(M^0)$:
\begin{equation}
\begin{tikzcd}
0  \arrow{r} & ker \xi ^- \arrow{r} & {H_G^{*}(M)/ker ~\xi ^+} 
\arrow{r} & {H_G^{*}(M^0)} \arrow{r} & 0 
\end{tikzcd} 
\end{equation}
Here we identify $ker \xi ^-$ with its isomorphic image in $H^*_G(M)/ ker ~\xi ^+$.
\end{remark}
\section{The Manifolds}
The manifolds we consider in this section are intersections of products of spheres of a fixed dimension with 
a number of hyperplanes of a particular type. The actions are just the restrictions of the canonical action on the ambient Euclidean space to some of the coordinates. Among these manifolds are the real analogues of the `big polygon spaces' considered in 
\cite{Franz:2014} (cf.~Remark~\ref{spaces}.(1) below). We calculate the equivariant cohomology with respect to different groups. It turns out
that the equivariant cohomology  for these manifolds with respect to certain subgroups is often torsion-free but not free and realizes all orders of syzygies which are in concordance with Proposition~\ref{thm:torsion-free-hd-1}.\\
Let $S^{m+n-1}:=\{(x_1,...,x_m,y_1,...,y_n) \in {\R}^{m+n};~ \sum_{i=1}^m x_i^2 +\sum_{j=1}^n y_j^2 = 1\}$ ,
$M:=(S^{m+n-1})^r$ for $m\geq 2,n \geq 0, r\geq 1$.
A point $ w \in M$ is given by the coordinates \\
 $w=((x_{1,1},.,x_{m,1},y_{1,1},.,y_{n,1}),(x_{1,2},.,x_{m,2},y_{1,2},.,y_{n,2}),.,(x_{1,r},.,x_{m,r},y_{1,r},.,y_{n,r})).$\\
Let $\ell:=(l_1,...,l_r) \in {(\R\setminus \{0\})}^r$. The vector $\ell$ is called $generic$ if and only if 
$\sum _{j=1}^r l_j \epsilon _j \neq 0$ for all $\epsilon _j = \pm 1$. We assume throughout this note that $\ell$ is generic.
We define $f_i:M\rightarrow\R$ by $f_i(w):= \sum_{j=1}^{r} {l_{j}x_{i,j}}$ for $i=1,...,m.$
We actually assume that all $l_{j}$ are positive, since one can replace the coordinate $x_{i,j}$ by $-x_{i,j}$,
if necessary.
Set $M_0:= M$, and  for $i=1,...m$, $M_{i}:=\{w\in M;f_{\mu}(w)=0$ for $\mu =1,...i\}$;\
furthermore let $g_i:=f_i|_{M_{{i-1}}}$, so $g_i:M_{{i-1}}\rightarrow \R$ .
We put $N_c: = g_m^{-1} (c)$, for $c \in \R $. Since $N_c$ is homeomorphic to $N_{-c}$ (by multiplying the coordinates
$(x_{m,1},...,x_{m,r})$ with $-1$), we may assume $c \geq 0$.
 
\begin{remark}
 \label{spaces}
Spaces of the above type have been considered by many mathematicians in different contexts,
 see ~\cite{FHS}, ~\cite{FS}, ~\cite{FF}, ~\cite{Franz:2014},~\cite{Hausmann:book} and the references therein, e.g.:

\begin{enumerate}

\item{Big polygon spaces}: These spaces are studied in \cite{Franz:2014}. They are the complex version of the above spaces $N_0=M_m$ with 
$\ell=(l_1,...,l_r) \in {\R}^r$ and $n \geq 1$ considered with an action of $(S^1)^r$.
(It is shown in ~\cite{Franz:2014} that without loss of generality one may assume that $\ell \in {(\R \setminus \{0\})}^r.$)

\item{Polygon spaces}:
If $\ell =(l_1,...,l_r) \in {\R}^r_{>0}$, 
the space $\{w\in M_{ m}; ~y_{\nu,1}=...=y_{\nu,r}=0$ for $\nu=1,..,n \}$
(which amounts to the same as just taking $n = 0$) 
is homeomorphic to the space  of polygons (resp. the free polygon space)
$\tilde{\mathcal N}_{m}^r (\ell )$ in ${\R}^m$ for the length vector $\ell $ (cf.~\cite{Hausmann:book}, Chapt.10.3; p.445). These spaces (named $E_m(\ell)$ in ~\cite{FF}) and in particular their non-equivariant cohomology with $\Z_2$-coefficients are studied in
~\cite{FS} and ~\cite{FF}.

\item{Big chain space}:
With the notation as in (2), the space\\ $\{w\in f_m^{-1}(c);~y_{\nu,1}=...=y_{\nu,r}=0$ for $\nu =1,..,n
\}$ is homeomorphic 
to the big chain space
$\mathcal {BC}_m^{r+1}(\tilde \ell)$ for $\tilde \ell = (l_1,...,l_r,c)$, in  \cite{Hausmann:book}, Chapt.10.3; p.444. We also consider $N_c$ for $n \geq 1$ and $c \neq 0$ as a big chain space.

\item{Chain spaces}:
Again with the notation as in (2), the space\\ $\{w\in g_{m}^{-1}(c);
~y_{\nu ,1}=...= y_{\nu ,r}=0$ for $\nu =1,..,n
\}$ is homeomorphic to the 
chain space $\mathcal C ^{r+1}_m (\tilde \ell)$ for $\tilde \ell = (l_1,...,l_r,c)$, in  \cite{Hausmann:book}, Chapt.10.3; p.444.
They were also studied in ~\cite{FHS}.
\end{enumerate}
\end{remark}

Our aim is to calculate the equivariant cohomology of the spaces $M_{ i}$, which are just intersections of M with certain hyperplanes, with respect to the standard linear $G =(\Z_2)^r$-action on all coordinates $y_{\nu,1},...,y_{\nu,r}$ for $\nu=1,..,n$. So the spaces described in (2) and (4) of the above remark occur as the fixed point sets of these actions on the corresponding spaces, where $y_{\nu ,1},...,y_{\nu ,r}$ for $\nu =1,..,n$ are arbitrary.

\begin{proposition}

 \label{Morse}
(1) For $i=1,...,m$ the functions $f_i:M \rightarrow \R$ are Morse functions with the set of isolated critical
points $C_i:=\{w
\in M$; all coordinates equal to zero, except $x_{i,j} = \pm 1$ for $j=1,...,r\}.$\\
(2) For $i= 1,...,m$ one has: If $\ell$ is generic, then  $M_{i-1}$ is a closed manifold, and $g_i:M_{i-1} \rightarrow \R$
is a Morse function with isolated critical points $C_i$ as above.

\end{proposition}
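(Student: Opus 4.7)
For part~(1), I would exploit the product structure: $M=(S^{m+n-1})^r$ and $f_i=\sum_{j=1}^r l_j x_{i,j}$ is a sum of functions each depending on a single sphere factor. On the $j$-th factor, $x\mapsto l_j x_{i,j}$ is (up to a nonzero rescaling) a height function on $S^{m+n-1}$, hence a Morse function with the two nondegenerate critical points $\pm e_{i,j}$. A sum of Morse functions on factors is a Morse function on the product whose critical set is the product of the individual critical sets, which is exactly $C_i$.

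For part~(2) I would proceed by induction on~$i$. The base case $i=1$ reads $M_0=M$ and $g_1=f_1$, and is handled by~(1). Assume inductively that $M_{i-1}$ is a closed manifold. The inductive step requires (a)~identifying the critical set of $g_i=f_i|_{M_{i-1}}$ with $C_i$, (b)~showing that all critical values of $g_i$ are nonzero, so $0$ is a regular value and $M_i=g_i^{-1}(0)$ is a closed manifold, and (c)~verifying Morse nondegeneracy of $g_i$ at each point of $C_i$.

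For (a) and (b) I would run Lagrange multipliers on $M$ using the $r$ sphere equations and the linear constraints $f_1=\cdots=f_{i-1}=0$. Writing the critical-point equations coordinate by coordinate forces $y_{\nu,j}=0$ and $x_{s,j}=0$ for $s>i$; the remaining relations give $\mu_j x_{i,j}=l_j/2$ (so $\mu_j\neq 0$ and $x_{i,j}\neq 0$) and $x_{k,j}=-\lambda_k x_{i,j}$ for $k<i$. Substituting into $f_k(w)=0$ yields $\lambda_k\,g_i(w)=0$ for every $k<i$, so either all $\lambda_k$ vanish, in which case $w\in C_i$, or $g_i(w)=0$. In the second case the sphere equation gives $x_{i,j}=\epsilon_j/c$ with $\epsilon_j\in\{\pm 1\}$ and a common $c>0$, whence $g_i(w)=c^{-1}\sum_j l_j\epsilon_j$, which is nonzero by the genericity of $\ell$ — a contradiction. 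Hence the critical set is exactly $C_i$, the critical values $\sum_j l_j\epsilon_j$ are all nonzero, and $0$ is a regular value.

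For (c), at $p\in C_i$ I take the local sphere chart in which $x_{s,j}$ ($s\neq i$) and $y_{\nu,j}$ are free coordinates, so that $M_{i-1}$ is cut out linearly by $\sum_j l_j x_{k,j}=0$, $k<i$. Expanding $f_i=\sum_j l_j\epsilon_j\sqrt{1-\sum_{s\neq i}x_{s,j}^2-\sum_\nu y_{\nu,j}^2}$ to second order yields a diagonal Hessian with entries $-\epsilon_j l_j$. Restricting this diagonal form to the hyperplane $\sum_j l_j v_{k,j}=0$ (for each $k<i$ separately, since the constraints decouple across the $k$-index) one checks by a short linear-algebra argument that the restriction is degenerate precisely when $\sum_j l_j\epsilon_j=0$; nondegeneracy in the $s>i$ and $y_{\nu,j}$ directions is automatic. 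The main obstacle — and the unifying point of the proof — is really organising the Lagrange multiplier analysis and the Hessian restriction so that both reduce transparently to the single genericity identity $\sum_j l_j\epsilon_j\neq 0$, which is used once in~(a)–(b) to kill the spurious branch and again in~(c) to rescue nondegeneracy.
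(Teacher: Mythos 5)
Your proposal is correct, and parts~(1) and~(2)(a)--(b) match the paper closely: part~(1) is the same product--of--height--functions argument, and your Lagrange--multiplier analysis in~(2) is the same as the paper's ``$dg_i$ normal to $M_{i-1}$'' conditions with $c_j=2\mu_j$ and $c_\mu=-\lambda_\mu$. (The paper applies genericity to the constraints $\sum_j l_j x_{\mu,j}=0$ for $\mu<i$ after observing that $x_{\mu,j}^2$ is $j$-independent, whereas you apply it to $g_i(w)=c^{-1}\sum_j\epsilon_j l_j$ to rule out the $g_i(w)=0$ branch; these are trivially equivalent bookkeeping choices.)

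Where you genuinely diverge is in the nondegeneracy step~(2)(c). The paper normalises $l_r=1$, eliminates $x_{\mu,r}$ to get an explicit $(r-1)\times(r-1)$ block of the constrained Hessian, and evaluates its determinant by hand via column and row operations, arriving at a product of the $l_j$'s times a generic sign combination $\pm 1\mp l_1\mp\cdots\pm l_{r-1}$. You instead observe that the Hessian block for each $s<i$ is the diagonal form $\mathrm{diag}(-\epsilon_1 l_1,\dots,-\epsilon_r l_r)$ restricted to the hyperplane $\sum_j l_j v_j=0$, and characterise degeneracy abstractly: the restriction is degenerate iff some nonzero $v$ in the hyperplane has $\mathrm{diag}(-\epsilon_j l_j)v$ proportional to $(l_1,\dots,l_r)$, which forces $v_j=-\alpha\epsilon_j$ and then $\sum_j \epsilon_j l_j=0$. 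This is a cleaner, coordinate-free route to the same genericity inequality, and it makes transparent that the Morse nondegeneracy at $p_{i,J}$ is governed by exactly the same quantity $l(J)=\sum_j\epsilon_j l_j$ that governs the critical values. Both arguments are correct; yours trades the paper's explicit determinant computation for a short structural linear-algebra observation, which is arguably more illuminating, while the paper's version produces the determinant formula on the way, which can be useful if one also wants the Morse indices.

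One small presentational gap: you should state explicitly (as you implicitly use) that $x_{i,j}^2=1/(1+\sum_{k<i}\lambda_k^2)$ is independent of $j$, which is what produces the common scalar $c$; the paper records the analogous step explicitly.
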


\begin{proof}
The proof of part (1) is essentially contained in ~\cite{Hausmann:book}, Lemma 10.3.1.
We give some details and introduce notation in view of the proof of part (b).

Part (1): The map $S^{m+n-1}  \rightarrow \R$ given by $(x_{1,j},...,x_{m,j},y_{1,j},...,y_{n,j}) 
\mapsto l_{j}x_{i,j}$\\ for a single sphere  is clearly a Morse function with two non-degenerated critical points, given by $x_{i,j} = \pm 1$ and all other coordinates equal to $0$. The corresponding Hesse matrix is a diagonal matrix  with entries $\pm l_{i,j}$ along the diagonal, if one uses the coordinate system for the sphere around the critical points obtained by projecting to the coordinates
different from $x_{i,j}$. It follows that for the map $f_i$ on $(S^{m+n-1})^r$  the set of critical points is just $C_i$ above,
and the Hesse matrix with respect to the coordinate systems chosen above is a huge diagonal matrix, which after arranging the 
variables in lexicographical order (i.e.: $w=(x_{1,1},x_{1,2},...,x_{1,r},x_{2,1},...x_{2,r},...,x_{m,r},y_{1,1},...,y_{n,r})$, and
omitting the coordinates $x_{i,j}$ for $j= 1,...,r)$, can be view as consisting of $m+n-1$ diagonal blocks of the form
\begin{equation}
\left(
 \begin{array}{cccccc}
 \pm l_{1}  &  0 &  0 &   .  & .  & .  \\
0  & \pm l_{2} &  0  & .   &.  & .   \\
.\\
.\\
.\\
0 & 0 & 0  & .  & .    &\pm l_{r}

\end{array}
\right)
\end{equation}
\\
In particular, all critical points in $C_i$ are regular, so $f_i$ is a Morse function, 
and $(\R  \setminus  {f_i(C_i)})$ is the set of regular values.

Part (2): Proof by induction on $i$. The beginning, $i=1$, is clear by part~(a).\\
By induction hypothesis 
$g_{i-1}: M_{ {i-2}} \longrightarrow \R$ is a Morse function on the compact manifold $M_{{i-2}}$
 with isolated critical points $C_{i-1}$.
Since $\ell $ is generic, $g_{i-1}$ is non zero on $C_{i-1}$. So $0 \in \R$ is a regular value of $g_{i-1}$ and hence 
$g_{i-1}^{-1}(0) = M_{{i-1}}$
is a compact manifold.
 Since $C_i$ above is contained in $M_{i-1}$, the points in $C_i$ are also critical points of $g_i$. We first show that there are no other critical points of $g_i$.
For a critical point $w \in M_{i-1}$ of $g_i$ the differential $dg_i$ must be normal to  $M_{i-1}$. This gives the following conditions for the single coordinates :
\begin{enumerate}
\item
 $l_j = c_j x_{i,j}$ for $j=1,...,r$ and some $c_j \neq 0$
\item
 $x_{\mu ,j} = 0$ for $ \mu >i; j=1,...,r$, and $y_{\nu,j} = 0 $ for $\nu = 1,...,n; j=1,...,r$
\item 
$x_{\mu ,j} /x_{i,j} = c_{\mu}$ for $\mu < i; j= 1,...,r$
\end{enumerate}
On the other hand one has 
\begin{equation}
\sum _{\mu =1} ^m x^2_{\mu,j} + \sum_{\nu =1} ^n y_{\nu ,j} = \sum _{\mu =1} ^i x^2_{\mu,j} = x^2_{i,j} +
\sum _{\mu =1} ^{i-1} c_\mu x^2_{i,j} = 1
\end{equation}
Hence $x^2_{i,j} = (1 + \sum _{\mu = 1} ^{i-1} c^2_\mu )^{-1}$ is independent of j.
Similarly for $\mu <i$ one has $x^2_{\mu ,j} = 1 - \sum _ {\xi \neq \mu} x^2 _{\mu ,j} = 
1 - \sum _ {\xi \neq \mu}  c^2 _\mu x^2 _{i,j}$ is also independent of $j$.
Since $l$ is generic $\sum _{j=1} ^r l_j x_{\mu ,j} = 0$ for $ \mu = 1,..., i-1$ implies
that $x_{\mu ,j} = 0 $ for $\mu = 1,...i-1; j= 1,...,r.$ \\
All together one has $x_{i,j} ^2 = 1$ for $j=1,...,r$ and $x_\mu ,j = 0$ for $j=1,...,r$ and $\mu \neq i$.
Also $y_{\nu ,j} = 0 $ for $\nu = 1,...,n; j = 1,...,r.$
This means that the set of singular points of $g_i$ is $C_i$.

We show next that the critical points are regular. This amounts to proving that the diagonal Hesse form of part (a) is still regular when restricted to the intersection with the linear subspaces of $(\R^{m+n})^r$ given by the 
equations $\sum_{j=1}^r l_{j}~x_{\mu ,j} = 0$ for $\mu=1,...,i-1$
Without restriction we may assume that $l_{r} = 1$. 
Hence for  $\mu =1,...,i-1$ we have $x_{\mu ,r} = - \sum_{j=1}^{r-1} l_{j}~x_{\mu ,j}$.
For the coordinates\\
$(x_{1,1} \dots x_{1,r-1}) \dots (x_{i-1,1}\dots x_{i-1,r-1}),(x_{i,1}\dots x_{i,r})\dots (x_{m,1}\dots x_{m,r}),(y_{1,1} \dots y_{n,r})$\\
we get the same blocks for the Hesse matrix as in part~(a), except for those where the coordinate $x_{\mu ,r}$ is deleted.
In the latter case the blocks look as follows:

\begin{equation}
\left(
 \begin{array}{cccccc}
{ \pm l_{1} +  l_{1}^2}  &  l_{1}~ l_{2}  &   .  &  . &  . &     l_{1}~ l_{r-1} \\
 l_{1} l_{2}  & \pm l_{2} +   l_{2}^2 &  .  &   . &   .&   l_{2} l_{r-1} \\
.\\
.\\
.\\
 l_{1}~ l_{r-1} & . & .  &  . &  .& \pm l_{r-1} +  l_{r-1} ^2

\end{array}
\right)
\end{equation}

We have to show that the determinant of such a block matrix is non-zero.  Adding appropriate multiples of the last column to the first (r-2) columns gives the following matrix:
\begin{equation}
\left(
 \begin{array}{cccccc}
{\pm l}_{1}  &   0   &   .  &  .  &   .&    l_{1}~ l_{r-1} \\
 0  & \pm l_{2}  &  .  &  .  &   .&    l_{2}~ l_{r-1} \\
.\\
.\\
.\\
\mp   l_{1}~ l_{r-1} & \mp   l_{2}~ l_{r-1} & .  &  . &  .& \pm l_{r-1} +  l_{r-1} ^2

\end{array}
\right)
\end{equation}

Adding $(\pm 1)$ times the first (r-2) rows to the last row gives a triangular matrix with determinant
$(\mp   l_{1})(\mp   l_{2})...(\mp   l_{r-2})( l_{r-1})(\pm 1  \mp   l_{1} \mp   l_{2}...\mp   l_{r-2} \pm l_{r-1}).$\\
This term is non-zero since  $ l_{j} \not = 0$  for $j=1,...,r$ as is $(\pm 1  \mp   l_{1} \mp   l_{2}...\mp   l_{r-2} \pm l_{r-1})$, because $l$ is generic.
\end{proof}

The Morse inequalities give the following result:

\begin{corollary}
 \label{inequality}
For $i=1,...,m$,   ~$dim_{\F_2} H^*(M_{i-1}) \leq  2^r.$
\end{corollary}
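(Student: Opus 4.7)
The strategy is immediate from the preceding proposition: apply the (weak) Morse inequalities over $\F_2$ to a Morse function on $M_{i-1}$ whose critical set has exactly $2^r$ points.

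More precisely, for $i=1$ one has $M_0 = M = (S^{m+n-1})^r$, on which Proposition~\ref{Morse}(1) exhibits $f_1$ as a Morse function with critical set $C_1$. For $i \geq 2$, Proposition~\ref{Morse}(2) shows that $M_{i-1}$ is a closed manifold and that $g_i : M_{i-1} \to \R$ is a Morse function with critical set $C_i$. In either case the critical set consists of the points of $M$ all of whose coordinates vanish except for $x_{i,j} = \pm 1$ with $j=1,\dots,r$, so its cardinality equals $2^r$.

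The weak Morse inequalities (valid over any field, in particular over $\F_2$; see, e.g., \cite{AlldayPuppe:1993}) then yield
\begin{equation*}
\dim_{\F_2} H^*(M_{i-1};\F_2) \;\leq\; \#\{\text{critical points}\} \;=\; 2^r,
\end{equation*}
which is the claimed estimate. There is no real obstacle here: the whole content of the corollary is already packaged into Proposition~\ref{Morse}, and the only thing to note is that $|C_i| = 2^r$ for every $i \in \{1,\dots,m\}$.
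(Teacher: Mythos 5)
Your argument is exactly the one the paper intends: the corollary is stated immediately after the sentence "The Morse inequalities give the following result," so the intended proof is precisely to apply the weak Morse inequalities over $\F_2$ to the Morse function $g_i$ from Proposition~\ref{Morse}, whose critical set $C_i$ has $2^r$ elements. Your write-up just makes this explicit, including the observation that $|C_i|=2^r$, and is correct.
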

 
The set $C_i$ can also be viewed as the fixed point set of the involution on $M_{i-1}$, which is given by 
multiplying all coordinates of $m\in M_{i-1}$ with $\pm 1$, except for the  $x_{i,j}, j=1,...,r$.
By the P.A.~Smith inequalities one has 
 $dim_{\F_2}   H^*(M_{i-1})\geq 2^r$.

\begin{corollary}
\label{perfect}

The maps $g_i:M_{i-1} \rightarrow \R$ are perfect Morse functions for $i=1,...,m$ and coefficients $\F_2$, and  $dim_{\F_2} H^*(M_{i-1})= 2^r$.

\end{corollary}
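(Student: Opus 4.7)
The plan is to combine the upper bound from Corollary~\ref{inequality} with a matching lower bound coming from P.A.~Smith theory, so that the Morse inequalities collapse to equalities.

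First I would recall what it means for $g_i$ to be a perfect Morse function over $\F_2$: namely, the number of critical points of $g_i$ must equal $\dim_{\F_2} H^*(M_{i-1})$. By Proposition~\ref{Morse} the critical set is exactly $C_i$, which has cardinality $2^r$, so the task reduces to showing $\dim_{\F_2} H^*(M_{i-1}) = 2^r$.

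The upper bound $\dim_{\F_2} H^*(M_{i-1}) \leq 2^r$ is precisely Corollary~\ref{inequality}, obtained from the Morse inequalities applied to $g_i$. For the lower bound I would use the involution $\tau_i$ on $M_{i-1}$ that negates every coordinate except the $x_{i,j}$, $j=1,\dots,r$. A point fixed by $\tau_i$ has all coordinates zero apart from the $x_{i,j}$, and the sphere relations force $x_{i,j}=\pm 1$; moreover such a point automatically lies in $M_{i-1}$ because $f_\mu$ vanishes on it for $\mu < i$. Thus the fixed point set of $\tau_i$ is exactly $C_i$, a set of $2^r$ isolated points. The P.A.~Smith inequality for the $\Z_2$-action generated by $\tau_i$ on $M_{i-1}$ with $\F_2$-coefficients then yields
\begin{equation}
\dim_{\F_2} H^*(M_{i-1}) \;\geq\; \dim_{\F_2} H^*(C_i) \;=\; 2^r.
\end{equation}

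Combining the two bounds gives $\dim_{\F_2} H^*(M_{i-1}) = 2^r$, which equals the number of critical points of $g_i$; hence $g_i$ is perfect. There is no real obstacle here, since both inequalities are already in place in the preceding discussion; the only point that warrants a line of verification is that $C_i$ truly is the full $\tau_i$-fixed set inside $M_{i-1}$, but that follows directly from inspecting the sphere equations together with the linear constraints defining $M_{i-1}$.
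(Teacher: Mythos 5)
Your proof is correct and follows the same route as the paper: the Morse inequalities give $\dim_{\F_2} H^*(M_{i-1}) \leq |C_i| = 2^r$, the P.A.~Smith inequalities applied to the involution negating all coordinates except the $x_{i,j}$ give the matching lower bound $\dim_{\F_2} H^*(M_{i-1}) \geq \dim_{\F_2} H^*(C_i) = 2^r$, and equality forces $g_i$ to be perfect. Your verification that the fixed set of this involution is exactly $C_i$ and lies in $M_{i-1}$ makes explicit a point the paper states without elaboration.
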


If a 2-torus acts on $M_{i-1}$ with fixed point set $C_i$, then this action is $\it {CEF}$. 
Furthermore: If $g_i$ is equivariant with respect to the 2-torus action and the trivial action on $\R$ then for a regular value $c \in \R$ of $g_i$ the subspaces
 $g^{-1}(-\infty,c) \simeq g^{-1}(-\infty,c]$ and $g^{-1}(c,\infty) \simeq g^{-1}[c,\infty)$ are also $\it {CEF}$.
If the equivariant cohomology of these spaces is known for a regular value $c \in \R$ of $g_i$,
then one can calculate the equivariant cohomology of $g^{-1}(c)$ by a Mayer-Vietoris argument (see Theorem ~\ref{MV}).
We apply this below. 
Let the action of  $\tilde G_i = \Z_2 \times (\Z_2)^r $ on $M$ be defined by the standard linear presentation of $(\Z_2)^r$ on the coordinates $y_{\nu,1} ,...,y_{\nu,r}$ and the `diagonal' involution which multiplies the coordinates $x_{\mu,\nu}$ by $\pm 1$, except for the coordinates $x_{i,j}, j=1,...,r$ where the action is trivial. Note that $C_i = M^{\tilde G _i}$ is contained in $M_{{i-1}}$, 
and  $M_{{i-1}}$ is $\tilde G_i$-invariant. So the $\tilde G_i$-action on $M_{i-1}$ is $\it {CEF}$ for $i=1,...,m$.\\
Although we are mainly interested in calculating the equivariant cohomology of the manifolds 
$M_{ i}$ with respect to the action of $G$ above, it turns out to be useful to calculate the equivariant cohomology with respect to the action of the bigger group $\tilde G_i $ first.

\begin{proposition}
 \label{HM}

$H^*_{\tilde {G_i}}(M) \cong \F_2[s_1,...,s_r ,t,t_1,...,t_r]/\{s_j \overline s_j,~ j=1,...,r\}$,\\
where $ \overline s_j =s_j + t^{m-1}t_j ^{n}, (|s_j| = m+n-1, |t_j| = |t|=1)$.

\end{proposition}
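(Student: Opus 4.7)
The plan is to compute $H^{*}_{G_{j}}(S_{j})$ for a single sphere factor $S_{j}$, where $G_{j}\subset \tilde G_{i}$ is the rank-two subgroup generated by the diagonal $\Z_{2}$ and by the $j$-th $\Z_{2}$ factor of $(\Z_{2})^{r}$ (the only elements of $\tilde G_{i}$ that act nontrivially on $S_{j}$), and then to assemble the full answer via an equivariant K\"unneth argument. For the single sphere I apply Theorem~\ref{MV} with $M^{\pm}:=\{\pm x_{i,j}\geq 0\}\subset S_{j}$ and $M^{0}:=\{x_{i,j}=0\}\subset S_{j}$. The hypotheses are easy to check: each $M^{\pm}$ is equivariantly contractible onto the fixed point $x_{i,j}=\pm 1$, hence is CEF with $H^{*}_{G_{j}}(M^{\pm})=R_{j}:=\F_{2}[t,t_{j}]$; the sphere $S_{j}$ is CEF by Corollary~\ref{perfect}; the nonequivariant restriction $H^{*}(S_{j})\to H^{*}(M^{\pm})$ is trivially surjective; and the action on $M^{0}$ is fixed-point free, because $x_{i,j}=0$ excludes the only two $G_{j}$-fixed points $x_{i,j}=\pm 1$.

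Next I identify $H^{*}_{G_{j}}(M^{0})$. The level set $M^{0}$ is the unit sphere of the $G_{j}$-representation on $\R^{m-1}\oplus\R^{n}$ in which the diagonal $\Z_{2}$ acts by $-1$ on the first summand and the $j$-th $\Z_{2}$ acts by $-1$ on the second; its top Stiefel--Whitney (i.e.\ Euler) class over $\F_{2}$ is the non-zero-divisor $t^{m-1}t_{j}^{n}\in R_{j}$. The Gysin sequence therefore collapses and gives the ring $H^{*}_{G_{j}}(M^{0})=R_{j}/(t^{m-1}t_{j}^{n})$, with both restrictions from the $M^{\pm}$ equal to the quotient map. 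The short exact sequence of Theorem~\ref{MV} then identifies $H^{*}_{G_{j}}(S_{j})$ with the ring-theoretic fibre product $\{(a,b)\in R_{j}\oplus R_{j}\mid a\equiv b\pmod{t^{m-1}t_{j}^{n}}\}$, in which a natural free $R_{j}$-basis is $1=(1,1)$ and $s_{j}:=(t^{m-1}t_{j}^{n},0)$. Since $\overline{s}_{j}:=s_{j}+t^{m-1}t_{j}^{n}$ corresponds to $(0,t^{m-1}t_{j}^{n})$, componentwise multiplication forces $s_{j}\overline{s}_{j}=0$, and a rank count then shows that the natural map $R_{j}[s_{j}]/(s_{j}\overline{s}_{j})\to H^{*}_{G_{j}}(S_{j})$ is an isomorphism.

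To pass from $G_{j}$ to the full group $\tilde G_{i}$ I use that the complementary subgroup (the product of the $\Z_{2}$ factors other than the $j$-th) acts trivially on $S_{j}$ and hence just contributes the additional polynomial generators $t_{k}$, giving $H^{*}_{\tilde G_{i}}(S_{j})=\F_{2}[t,t_{1},\ldots,t_{r},s_{j}]/(s_{j}\overline{s}_{j})$. For the product $M=\prod_{j}S_{j}$ I would then invoke the equivariant K\"unneth formula, applicable here because each $H^{*}_{\tilde G_{i}}(S_{j})$ is CEF, hence $R$-free over $R:=\F_{2}[t,t_{1},\ldots,t_{r}]$; this gives
\begin{equation*}
H^{*}_{\tilde G_{i}}(M)\cong H^{*}_{\tilde G_{i}}(S_{1})\otimes_{R}\cdots\otimes_{R}H^{*}_{\tilde G_{i}}(S_{r})=\F_{2}[s_{1},\ldots,s_{r},t,t_{1},\ldots,t_{r}]/(s_{j}\overline{s}_{j})_{j=1,\ldots,r},
\end{equation*}
as claimed.

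The main obstacle is reading off the ring (not merely the module) structure in the single-sphere step; for this I crucially use that the restriction maps in Theorem~\ref{MV} are ring homomorphisms and that restriction to the fixed-point set is injective on CEF spaces, so that multiplication in $H^{*}_{G_{j}}(S_{j})$ really does coincide with componentwise multiplication in the fibre product. Once this is in hand, the verification of the MV hypotheses, the collapse of the Gysin sequence, and the K\"unneth assembly are all fairly routine.
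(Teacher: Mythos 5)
Your proposal is correct and follows essentially the same route as the paper's one-line proof, which likewise reduces to a single sphere factor and assembles the answer by the equivariant Künneth theorem (the paper delegates the single-sphere computation to \cite{Hausmann:book}, Prop.~10.3.5, whereas you carry it out explicitly via Theorem~\ref{MV} and the Gysin sequence). One minor inaccuracy: the CEF property of $S_j$ follows directly from Smith theory --- the $G_j$-fixed set consists of two points, matching $\dim_{\F_2} H^*(S^{m+n-1})=2$ --- rather than from Corollary~\ref{perfect}, which concerns the intersected manifolds $M_{i-1}$ and the Morse functions $g_i$.
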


\begin{proof}
The proof follows by induction on $r$ using the Künneth Theorem (cf.\cite{Hausmann:book}, Proposition 10.3.5, with different notation).
\end{proof}

For $J = \{j_1,...,j_k\} \subset \{ 1,...,r\}$ we denote by $p_{i,J}$ the fixed point in $C_i \subset M$
with\\
$x_{i,j} = \begin{cases}
+1  & \text{for $j \in J$} \\-1 & \text{for $j\not \in J$}
\end{cases}$
\\
In Proposition \ref{HM} we can choose the variables $s_1,...,s_r$ in such a way that the restriction to a fixed point $p_{i,J}$ in equivariant cohomology is given by the following proposition.
\begin{proposition}
\label{FP}
The inclusion $p_{i,J}  \in  M$ induces the following map in equivariant cohomology\\
\mbox
{$H^*_{\tilde G_i}(M) \cong\F_2[s_1,...,s_r,t,t_1,...t_r]/\{s_j \overline s_j,~ j=1,...,r\} \longrightarrow
 H^*_{\tilde G_i}(p_{i,J}) \cong\F_2[t,t_1,...,t_r] $,}\\
$s_j \longmapsto  \left \{
 \begin{array}{ccc}
t^{m-1} t_j ^n
 &  $ if $  & j \in J \\
0  &  $ if $  & j \not \in J \\
\end{array}\right.$
\end{proposition}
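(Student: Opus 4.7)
My plan is to reduce to the case of a single sphere factor by the K\"unneth-type argument used in Proposition~\ref{HM}, and then identify a distinguished representative of $s_j$ on that factor as an equivariant Thom class supported on the hemisphere $\{x_{i,j}\geq 0\}$.

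First I would observe that the $\tilde G_i$-action on the $j$-th sphere factor $S^{m+n-1}_{(j)}$ of $M = \prod_{j=1}^{r} S^{m+n-1}_{(j)}$ factors through the projection $\tilde G_i \twoheadrightarrow \Z_2\times\Z_2$ onto the subgroup generated by $t$ and $t_j$, with all other $t_\ell$ acting trivially. The K\"unneth decomposition underlying Proposition~\ref{HM} then expresses $H^*_{\tilde G_i}(M)$ as a tensor product over $H^*(B\tilde G_i) = \F_2[t,t_1,\ldots,t_r]$ of single-factor contributions, and each $s_j$ is pulled back from the $j$-th factor. Since $p_{i,J}$ is the product of its coordinatewise projections $q_{\epsilon_j}$, where $q_+$ (resp.\ $q_-$) denotes the fixed point of the $j$-th factor with $x_{i,j}=+1$ (resp.\ $-1$) and $\epsilon_j=+$ iff $j\in J$, the restriction homomorphism $H^*_{\tilde G_i}(M) \to H^*_{\tilde G_i}(p_{i,J})$ is the tensor product of single-sphere restrictions. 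It therefore suffices to show that, on a single sphere, $s_j$ may be chosen so that $s_j|_{q_+} = t^{m-1}t_j^n$ and $s_j|_{q_-}=0$.

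Next I would view $S^{m+n-1} = S(V\oplus \R)$, where the $\R$-summand is the trivial $x_{i,j}$-axis and $V = \R^{m-1}\oplus\R^n$ carries the representation with $t$ acting by $-1$ on $\R^{m-1}$ and $t_j$ by $-1$ on $\R^n$. Split the sphere equivariantly into the closed hemispheres $D_\pm = \{\pm x_{i,j}\geq 0\}$, each an equivariant disk bundle over its centre $q_\pm$ with fibre $D(V)$ and common boundary the equator $S(V)$. The equivariant Thom class of the normal representation $V$ at $q_+$ lies in $H^{m+n-1}_{\tilde G_i}(D_+,\partial D_+) \cong H^{m+n-1}_{\tilde G_i}(S^{m+n-1}, D_-)$; its image $s_j \in H^{m+n-1}_{\tilde G_i}(S^{m+n-1})$ lifts through $H^*_{\tilde G_i}(S^{m+n-1}, D_-)$, so $s_j|_{q_-}=0$ because $q_-\in D_-$, while $s_j|_{q_+}$ is the equivariant Euler class $e(V) = t^{m-1}t_j^n$ by the defining property of the Thom class. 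The symmetric construction using $D_-$ produces $\bar s_j$ with the opposite restrictions, and since $s_j$ and $\bar s_j$ are supported on the disjoint open hemispheres the product $s_j\bar s_j$ vanishes, matching the relation of Proposition~\ref{HM} and pinning down the choice of $s_j$.

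The main technical point I expect to face is the naturality check that the generator identified here really coincides with the one produced by the K\"unneth construction in Proposition~\ref{HM}, so that the coordinatewise restriction formula on the product applies; this reduces to compatibility of the equivariant Thom class with external products, which is routine but needs to be written out carefully.
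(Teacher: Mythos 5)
Your proof is correct and follows essentially the same route as the paper, whose own proof consists of the single sentence ``induction on $r$ using the K\"unneth Theorem.'' Your equivariant Thom-class argument for the single-sphere base case --- producing $s_j$ from the relative class on $(D_+,\partial D_+)$ so that $s_j|_{q_-}=0$ and $s_j|_{q_+}=e(V)=t^{m-1}t_j^n$, with $\bar s_j = s_j + t^{m-1}t_j^n$ then forced by injectivity into the fixed-point cohomology and $s_j\bar s_j = 0$ from disjoint supports --- is the natural way to flesh out that sketch and correctly exhibits the required choice of generators.
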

 
\begin{proof}
Again the proof can be given by induction on $r$ using the Künneth Theorem.
\end{proof}
 Our main aim is to calculate the equivariant cohomology of $N_0=M_{ m}$.
Before we get to the equivariant cohomology of $N_0$ with respect to 
the $G ={(\Z_2})^r$-action we consider the equivariant cohomology with respect to 
an extended action of the group $\tilde G = \Z_2 \times G$ where the first factor $\Z_2$
acts by multiplication with $\pm 1$ on all coordinates $x_{i,j}$ except $x_{m,1},...,x_{m,r}$. 
This coincides with the action of $\tilde G _m$ above.
The maps $g_i$ are equivariant with respect to the above 
action, if one takes the trivial action on $\R$. The fixed point sets $M^{\tilde G}, M_{ {m-1}}^{\tilde G}$
coincide and are equal to the above defined $C_m$. Analogous to the $\tilde G_i$-action also the $\tilde G$-action is $\it {CEF}$ on $M_{i-1}$ for $i=1,...,m$.
One therefore gets inclusions 
\begin{equation}
H^*_{\tilde G}(M) \stackrel {\gamma _1^*} \longrightarrow H^*_{\tilde G}(M_1) \stackrel {\gamma _2^*} \longrightarrow ...  \stackrel {\gamma _{m-1}^*} \longrightarrow H^*_{\tilde G}(M_{m-1}) \stackrel {\gamma _m^*} \longrightarrow H^*_{\tilde G}(C_m).
\end{equation}
In order to calculate the maps $\gamma ^*_i$ above we also consider the Gysin map $\gamma _{i !}$ induced by the 
inclusion  $\gamma _i:M_i  \rightarrow M_{i -1}$. The composition $(\gamma _{i !})(\gamma ^*_{i})$ is given by the multiplication with the equivariant Euler
class (cf.~\cite{Kawakubo:1991} or ~\cite{AlldayPuppe:1993} for an account of equivariant Gysin homomorphisms, Euler classes, Thom classes etc.). In our case the Euler classes are given by the following Lemma.
\begin{lemma}
\label{EC}
For $i = 1,..., m-1$ the $\tilde G$-equivariant Euler class of the inclusion $\gamma _i:M_i  \rightarrow M_{i -1}$ is $~(t \otimes 1) \in H^*_{\tilde G}(M_{i -1}) \cong  \F_2[t] \otimes H^*_G(M_{i -1}) $.
\end{lemma}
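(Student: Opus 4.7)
My plan is to exhibit $M_i$ as the transverse zero locus of a $\tilde G$-equivariant section of a trivial equivariant line bundle on $M_{i-1}$, and then simply read off the Euler class.

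First I would analyse the $\tilde G$-action on the target of $g_i$. The function $g_i$ depends only on the coordinates $x_{i,j}$, on which $G$ acts trivially, so $g_i$ is $G$-invariant. Moreover, the generator $\sigma$ of the first $\Z_2$-factor of $\tilde G$ negates every coordinate $x_{\mu,j}$ with $\mu\neq m$, so for $i\leq m-1$ one has $g_i\circ\sigma=-g_i$. Hence $g_i$ is a $\tilde G$-equivariant map into $\R_\sigma$, the one-dimensional real $\tilde G$-representation on which $\sigma$ acts by $-1$ and $G$ acts trivially; equivalently, $g_i$ is an equivariant section of the trivial line bundle $L:=M_{i-1}\times\R_\sigma\to M_{i-1}$. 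Proposition~\ref{Morse}(2) ensures that $0$ is a regular value of $g_i$, so this section is transverse to zero with zero locus exactly $M_i$.

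Next, by the projection formula, the class characterised by $\gamma_{i!}\gamma_i^*=e_{\tilde G}(\gamma_i)\cdot(-)$ is the same as $\gamma_{i!}(1)$, the equivariant Poincar\'e dual of $M_i$ in $M_{i-1}$. The standard equivariant Thom-class/transversality principle (see e.g.\ \cite{AlldayPuppe:1993}) identifies this Poincar\'e dual with the equivariant Euler class $e_{\tilde G}(L)$. Since $L$ is pulled back from a point, $e_{\tilde G}(L)$ is in turn the pullback of $e_{\tilde G}(\R_\sigma)\in H^1(B\tilde G)$, and under the K\"unneth decomposition $H^*(B\tilde G)\cong\F_2[t]\otimes R$ (with $t$ the generator coming from the first $\Z_2$-factor) the sign representation $\R_\sigma$ contributes exactly $t$. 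Pulling back to $H^*_{\tilde G}(M_{i-1})\cong\F_2[t]\otimes H^*_G(M_{i-1})$ yields the claimed $t\otimes 1$. The only real piece of bookkeeping is the sign $g_i\circ\sigma=-g_i$, which is immediate from the definitions; everything else is formal.
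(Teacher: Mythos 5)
Your proof is correct and follows essentially the same route as the paper: in both cases one observes that $g_i$ is a $\tilde G$-equivariant map to the sign representation $\R_\sigma$ (trivial for $G$, sign for the extra $\Z_2$) with $0$ a regular value, so the Euler class of $\gamma_i$ is the pullback of the equivariant Euler class of $\{0\}\subset\R_\sigma$, namely $t\otimes 1$. You merely spell out the line-bundle/Thom-class bookkeeping that the paper leaves implicit.
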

\begin{proof}
The Morse function $g_i:M_{i-1} \rightarrow \R$ is $\tilde G$-equivariant, where the $\tilde G$-action on $\R$ is given by the trivial action of $G \subset \tilde G$, and the standard action of $\Z_2$ on $\R$. So the desired Euler class is just the pull back of the the Euler class of the inclusion $\{0\} \subset \R$, which is $~(t \otimes 1)$ as claimed.
\end{proof}

 Due to the above inclusions we view the elements  in $H^*_{\tilde G} (M_{ i})$ also as elements in 
$H^*_{\tilde G} (M^ {\tilde G}) = H^*_{\tilde G }(C_m)$ for $i = 0,...,m-1$.
In particular $s_j$ is divisible by $t^{m-1}$ in $H^*_{\tilde G} (M^ {\tilde G}) = H^*_{\tilde G }(C_m)$ (see ~\ref{FP}). We use the following notation
\smallskip
$s_J:= \prod _{j \in J} s_j$ and $\overline{s}_J:= \prod _{j \in J} \overline{s}_j$ for $J \subset \{1,...,r\}$ 
(cf.~Proposition~\ref{HM}). 
Note that $l(J):=f_i(p_{i,J}) = \sum_{\{j \in J \}} l_j ~ - ~\sum_{\{j \not\in J\}} l_j$ is independent of $i$. 
(We point out that $l(J)$
coincides with $L_J$ in ~\cite{FF}, but not with $ \ell (J) $ in ~\cite{Franz:2014}.)
 In the literature
the set $J$ is called \emph{short}, if $l(J) < 0$, and \emph{long}, if $l(J) > 0$.
For $i=1,...,m$ we define 
 $\mu _i (J): = \begin{cases}
i & \text{for $ ~l(J) > 0 $}\\ 0 & \text{for $ ~l(J) < 0$}
\end{cases}$\\
Let $\tilde R = H^*(\Z_2\times {(\Z_2) ^r}) = \F_2[t,t_1,...,t_r].$

\begin{theorem}
\label{HM_1}
Let $\ell $ be generic.
For $i = 0,..., m-1$ one has\\
\noindent
$H^*_{\tilde G }(M_{ {i}}) \cong 
\tilde R \langle s_J/t^{\mu _{i}(J)}; ~J \subset \{1,...,r\} \rangle$, i.e.~%
 the  $\tilde R$-subalgebra $H^*_{\tilde G}(M_{ {i}})$ of $H^*_{\tilde G}(M^{\tilde G})$ is generated by the elements
$\{s_J/t^{\mu_i (J)}; J \subset \{1,...,r\}\}$ as a free $\tilde R$-module.
\end{theorem}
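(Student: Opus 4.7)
The argument is by induction on $i$ from $0$ to $m-1$. The base case $i = 0$ is exactly Proposition~\ref{HM} (with $\mu_{0} \equiv 0$, the generators are the $s_{J}$). Throughout the inductive step, I rely on three tools: Lemma~\ref{EC} combined with the projection formula, giving $\gamma_{i!}\gamma_{i}^{*} = t \cdot \mathrm{id}$ and, since the restricted Euler class is again $t$, also $\gamma_{i}^{*}\gamma_{i!} = t \cdot \mathrm{id}$; Corollary~\ref{perfect} together with $M_{i}^{\tilde G} = C_{m}$ having cardinality $2^{r}$, which forces $M_{i}$ to be $\tilde G$-CEF, so $H^{*}_{\tilde G}(M_{i})$ is a free $\tilde R$-module of rank $2^{r}$ embedded in $H^{*}_{\tilde G}(C_{m})$ via restriction to the fixed set; and Proposition~\ref{FP} for explicit restrictions at fixed points. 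Let $\sigma_{J}^{(i-1)} \in H^{*}_{\tilde G}(M_{i-1})$ denote the basis element restricting to $s_{J}/t^{\mu_{i-1}(J)}$ in $H^{*}_{\tilde G}(C_{m})$, furnished by the induction hypothesis.

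For short $J$, where $\mu_{i-1}(J) = \mu_{i}(J) = 0$, I simply set $\sigma_{J}^{(i)} := \gamma_{i}^{*}(\sigma_{J}^{(i-1)})$; its restriction to $C_{m}$ is again $s_{J}$. For long $J$, where $\mu_{i-1}(J) = i-1$ and $\mu_{i}(J) = i$, the pullback $\gamma_{i}^{*}(\sigma_{J}^{(i-1)})$ restricts to $s_{J}/t^{i-1} = t \cdot (s_{J}/t^{i})$ in $H^{*}_{\tilde G}(C_{m})$; to obtain $\sigma_{J}^{(i)}$ restricting to $s_{J}/t^{i}$, I must produce $\tau_{J} \in H^{*}_{\tilde G}(M_{i})$ with $t\tau_{J} = \gamma_{i}^{*}(\sigma_{J}^{(i-1)})$, and then set $\sigma_{J}^{(i)} := \tau_{J}$. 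By the identity $\gamma_{i}^{*}\gamma_{i!} = t$, such a $\tau_{J}$ exists as soon as $\sigma_{J}^{(i-1)}$ lies in the image of the Gysin map $\gamma_{i!}$.

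The main obstacle is establishing this membership in $\mathrm{Im}\,\gamma_{i!}$. I plan to use the Gysin long exact sequence
\[
\cdots \to H^{*-1}_{\tilde G}(M_{i}) \xrightarrow{\gamma_{i!}} H^{*}_{\tilde G}(M_{i-1}) \xrightarrow{\mathrm{res}} H^{*}_{\tilde G}(U) \to \cdots,
\]
where $U := M_{i-1} \setminus M_{i} = U^{+} \sqcup U^{-}$ and the $\Z_{2}$-factor of $\tilde G$ freely swaps $U^{\pm}$, so that $H^{*}_{\tilde G}(U) \cong H^{*}_{G}(U^{+})$ and $\mathrm{res}$ sends $t$ to $0$, factoring through $H^{*}_{\tilde G}(M_{i-1})/t$. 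Exactness reduces the task to proving $\mathrm{res}(\sigma_{J}^{(i-1)}) = 0$ in $H^{*}_{G}(U^{+})$ for long $J$. My approach is to identify $H^{*}_{G}(U^{+})$ using the Morse-theoretic structure of $g_{i}|_{U^{+}}$, whose critical points are exactly the long $p_{i,K}$, and to match it with the description of $H^{*}_{\tilde G}(M_{i-1})/t$ from the inductive hypothesis. For long $J$, the restriction of $\sigma_{J}^{(i-1)}$ to $C_{m}$ contains the factor $t^{(m-1)|J| - (i-1)}$, which is a positive power of $t$ whenever $|J| \geq 1$ and $i \leq m-1$; combined with a careful Morse-theoretic rank count on $U^{+}$, this divisibility at fixed points should force the vanishing of $\mathrm{res}(\sigma_{J}^{(i-1)})$. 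Making this Morse-theoretic identification rigorous, and in particular controlling the $R$-module structure of $H^{*}_{G}(U^{+})$ compatibly with the filtration from the inductive hypothesis, is the delicate part.

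Once the elements $\sigma_{J}^{(i)}$ are constructed, linear independence over $\tilde R$ follows from a triangularity argument on supports in $H^{*}_{\tilde G}(C_{m}) = \bigoplus_{K} \tilde R$: each $\sigma_{J}^{(i)}$ is supported on $\{K : J \subseteq K\}$, and ordering subsets by inclusion yields independence. Since $H^{*}_{\tilde G}(M_{i})$ is free of rank $2^{r}$, these $2^{r}$ independent elements generate a free submodule of full rank. To upgrade this to equality, I compare Poincaré polynomials degree by degree: the perfect Morse function $g_{i+1}$ on $M_{i}$ (Corollary~\ref{perfect}) produces a Poincaré polynomial for $H^{*}(M_{i})$ whose monomials match the degrees $|J|(m+n-1) - \mu_{i}(J)$ of the $\sigma_{J}^{(i)}$, so the $\tilde R$-span must exhaust $H^{*}_{\tilde G}(M_{i})$, completing the inductive step.
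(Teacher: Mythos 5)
Your outline has the right shape---identify the Euler class as $t$, use the Gysin sequence for $M_i \hookrightarrow M_{i-1}$, and reduce the construction of $\sigma_J^{(i)}$ for long $J$ to showing that $\sigma_J^{(i-1)}$ lies in $\operatorname{Im}\gamma_{i!}$, i.e.\ restricts to zero on $U = M_{i-1}\setminus M_i$. But the step you yourself flag as ``the delicate part'' is exactly where your argument breaks, and the heuristic you offer for it is not correct. You argue that the restriction of $\sigma_J^{(i-1)}$ to each fixed point $p_{i,K}$ carries a positive power of $t$, and that this divisibility should force vanishing in $H^*_G(U^+)$. But the same divisibility holds for \emph{short} nonempty $J$: by Proposition~\ref{FP}, $s_J$ restricts at $p_{i,K}$ to $t^{(m-1)|J|}\prod_{j\in J}t_j^n$, which is divisible by $t^{m-1}$ for every $J\neq\emptyset$. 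Since the short $\sigma_J^{(i-1)}$ certainly do \emph{not} restrict to zero on $U^+$ (that would contradict the freeness and rank of $H^*_{\tilde G}(M_i)$), divisibility of the fixed-point restriction by $t$ cannot by itself be the criterion. Additionally, because the whole $\tilde G$-fixed set $C_m$ lies in $M_i$, localization at $\tilde G$-fixed points gives you no leverage on $U^+$; and $H^*_G(U^+)$ is itself the equivariant cohomology of (a thickening of) a chain space, so trying to compute it directly is essentially circular with the theorem you are proving.

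The paper sidesteps this by a device you did not use: it introduces the auxiliary group $\tilde G_i=\Z_2\times G$ with a \emph{different} diagonal $\Z_2$-involution (trivial on the $x_{i,\cdot}$-coordinates), whose fixed-point set on $M_{i-1}$ is $C_i$, and runs an alternating induction between Theorem~\ref{HM_1} (for $\tilde G=\tilde G_m$) and Theorem~\ref{HMG_1} (for $\tilde G_i$). It decomposes $M_{i-1}=M_{i-1}^+\cup M_{i-1}^-$ and applies Theorem~\ref{MV} in the $\tilde G_i$-equivariant setting, where the kernels $\ker\xi^\pm$ can be read off from restriction to the $\tilde G_i$-fixed points $C_i\cap M_{i-1}^\pm$; this is precisely what is inaccessible in the $\tilde G$-picture. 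It then passes between the three actions via the ``evaluation at $t=0$'' square comparing $H^*_{\tilde G}$, $H^*_G$, and $H^*_{\tilde G_i}$: the vanishing of $s_J/t^{i-1}$ in $H^*_{\tilde G_i}(M_i)$ for long $J$ propagates to vanishing in $H^*_G(M_i)$, and hence divisibility by $t$ in $H^*_{\tilde G}(M_i)$. Your construction of $\sigma_J^{(i)}$ by dividing by $t$ then goes through, and is in fact what the paper does; but establishing the divisibility genuinely requires the $\tilde G_i$-computation, not a fixed-point divisibility count. For the spanning step, your Poincar\'e-polynomial route is plausible but unverified; the paper instead expands $\gamma_{i!}(x)$ in the induction basis for $M_{i-1}$, applies $\gamma_i^*$, reduces mod $t$, and uses linear independence of the $s_I$ ($I$ short) in $H^*_G(M_i)$---which again comes from the $\tilde G_i$-calculation---to cancel $t$. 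So the essential missing ingredient in your proposal is the auxiliary family of groups $\tilde G_i$ and the coupled induction with Theorem~\ref{HMG_1}.
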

 
We will give the proof of this theorem by alternating induction together with the proof of the following theorem.

\begin{theorem}
\label{HMG_1}
Let $\ell $ be generic.
For $i = 1,..., m
$ one has\\
\noindent
$H^*_{\tilde G _{i}}(M_{ {i}}) \cong 
\tilde R \langle s_J/t^{\mu _{i-1}(J)}; ~J \subset \{1,...,r\} \rangle /
 \tilde R \langle s_J/t^{i-1}(J),\overline s_J/t^{i-1}(J); l(J) > 0\rangle$.
\end{theorem}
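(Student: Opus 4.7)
The plan is to apply the Mayer--Vietoris Theorem~\ref{MV} to the decomposition of $M_{i-1}$ by the Morse function $g_i$, now with respect to the $\tilde G_i$-action. Since the first $\Z_2$-factor of $\tilde G_i$ fixes the coordinates $x_{i,j}$ on which $g_i$ depends, $g_i$ is $\tilde G_i$-equivariant with the trivial action on $\R$. I will take $\tilde G_i$-invariant open thickenings $U^\pm$ of $g_i^{-1}[0,\infty)$ and $g_i^{-1}(-\infty,0]$; then $U^+ \cup U^- = M_{i-1}$ and $U^+ \cap U^-$ deformation retracts $\tilde G_i$-equivariantly onto $M_i$. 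By Corollary~\ref{perfect} applied to $g_i$ and its restrictions to $U^\pm$, combined with the Smith inequality, $M_{i-1}$ and $U^\pm$ are $\it{CEF}$; the $\tilde G_i$-fixed set of $U^\pm$ is $\{p_{i,J}:\pm l(J)>0\}$, while the $\tilde G_i$-action on $M_i$ is fixed-point-free by the genericity of $\ell$ (as in the proof of Proposition~\ref{Morse}(2)). Theorem~\ref{MV} then gives
\begin{equation*}
H^*_{\tilde G_i}(M_i) \;\cong\; H^*_{\tilde G_i}(M_{i-1}) / (\ker\xi^+ \oplus \ker\xi^-).
\end{equation*}

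Next I will identify the source via the analogue of Theorem~\ref{HM_1} for the $\tilde G_i$-action on $M_{i-1}$: Propositions~\ref{HM} and~\ref{FP} together with Lemma~\ref{EC} apply verbatim with $\tilde G$ replaced by $\tilde G_i$ and $g_m$ by $g_i$, giving $H^*_{\tilde G_i}(M_{i-1})\cong\tilde R\langle s_J/t^{\mu_{i-1}(J)}\rangle$ as a free $\tilde R$-module. For the kernels, $\it{CEF}$ makes restriction of $H^*_{\tilde G_i}(U^\pm)$ to fixed points injective, so $\ker\xi^+$ (respectively $\ker\xi^-$) consists of classes vanishing at every long (respectively short) fixed point $p_{i,K}$. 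Proposition~\ref{FP} shows that $s_J/t^{i-1}$ restricts nontrivially at $p_{i,K}$ iff $J\subset K$, and $\overline s_J/t^{i-1}$ iff $J\cap K=\emptyset$. Since all $l_j>0$, one has the monotonicity: $l(J)>0$ and $K\supset J$ force $l(K)>0$, while $l(J)>0$ and $K\cap J=\emptyset$ force $l(K)<0$. Hence for every $J$ with $l(J)>0$, $s_J/t^{i-1}\in\ker\xi^-$ and $\overline s_J/t^{i-1}\in\ker\xi^+$. A rank count finishes: since $U^\pm$ is $\it{CEF}$ with $|\{J:\pm l(J)>0\}|$ fixed points, $\ker\xi^\mp$ has $\tilde R$-rank equal to the number of long subsets, matching the number of proposed generators; linear independence of these follows from a triangular pattern of their restrictions to $C_i$ indexed by the subset lattice. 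Therefore $\ker\xi^+\oplus\ker\xi^- = \tilde R\langle s_J/t^{i-1},\overline s_J/t^{i-1}:l(J)>0\rangle$, yielding Theorem~\ref{HMG_1}.

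I expect the main obstacle to be the $\tilde G_i$-analogue of Theorem~\ref{HM_1} used to identify the source: for $i<m$ the groups $\tilde G$ and $\tilde G_i$ are genuinely different actions of $\Z_2\times G$ on $M_{i-1}$ and Theorem~\ref{HM_1} does not apply directly. This is precisely where the alternating induction earns its keep, with Theorem~\ref{HMG_1} at step $i$ and the $\tilde G_i$-analogue of Theorem~\ref{HM_1} at step $i-1$ proved in tandem, each step feeding the next via Lemma~\ref{EC} and a Gysin argument. For the case of primary interest $i=m$ (the space $N_0=M_m$), $\tilde G_m=\tilde G$ and the source reads off directly from Theorem~\ref{HM_1} at $i=m-1$, so the argument specializes cleanly.
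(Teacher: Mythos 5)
Your proposal is correct and follows the paper's argument essentially step for step: you apply Theorem~\ref{MV} to the decomposition of $M_{i-1}$ by $g_i$, verify the CEF and fixed-point-freeness hypotheses by the perfect Morse function together with Smith theory, identify $\ker\xi^{\pm}$ via Proposition~\ref{FP} and the monotonicity of $l(\cdot)$, do the rank count, and correctly recognize that the description of the source $H^*_{\tilde G_i}(M_{i-1})$ is what the alternating induction with Theorem~\ref{HM_1} must supply. For the record, the paper closes the point you flag not by re-running the derivation of Theorem~\ref{HM_1} with $\tilde G$ replaced by $\tilde G_i$, but by observing that swapping the coordinates $x_{i,j}\leftrightarrow x_{m,j}$ gives an equivariant homeomorphism between the $\tilde G_i$-space $M_\mu$ and the $\tilde G$-space $M_\mu$ for $\mu<i$, so the already-established Theorem~\ref{HM_1} at step $i-1$ transfers directly to the $\tilde G_i$-action.
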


\begin{proof}
The proof is by induction on $i$. The beginning, $i=0$, is clear for Theorem~\ref{HM_1} by Propositions~\ref{HM}. 
We proof Theorem~\ref{HMG_1} for $i$ under the assumption that Theorem~\ref{HM_1} holds for $i-1$.
We apply Theorem~\ref{MV} to the action of $\tilde G _i$ and the decomposition
$M_{i-1}=M^+_{i-1} \cup_{M^0_{i-1}} M^-_{i-1}$ with
$M^+_{i-1}:=\{w \in M_{i-1}; g_i(w) > 0\}, 
\linebreak  M^-_{i-1}:=\{w \in M_{i-1}; g_i (w) < 0\}$ 
and $M^0 _{i-1}:= \{w \in M_{i-1}; g_i (w) = 0\} = M_i.$
Since $l$ is generic, $g_i$ does not vanish on $C_i$.
So $M_i$ is a compact manifold since the set of critical points of $g_i$ is $C_i$; and $(M_i)^{\tilde G_i}$ is empty. 
Using the fact that $g_i$ is a perfect Morse function (see Proposition~\ref{perfect}), and comparing the total dimensions 
of the (non-equivariant) cohomology modules of the respective ambient spaces and their fixed point sets, one sees, by Smith theory, that not only $M_{i-1}$ but also ${M_{i-1}^\pm}$ are $\it {CEF}$ with 
respect to ${\tilde G_i}$. So the assumptions of Theorem~\ref{MV} are fulfilled.
This gives, with the notation corresponding to that in Theorem~\ref{MV},
\begin{equation}
H_{\tilde G_i}^{*}(M_i)
\cong {H_{\tilde G_i}^{*}(M)}/{({ker \xi^+}\oplus {ker \xi^-})}
\end{equation}
Since the $M_{i-1}^\pm$ are $\it {CEF}$, the kernels ${ker \xi^\pm}$ coincide with the kernels of the composition 
\begin{equation}
\begin{tikzcd}
H^*_{\tilde G_i}(M_{i-1}) \arrow{r}  & H^*_{\tilde G_1}(M^\pm_{i-1}) \arrow{r}   &  H^*_{\tilde G_i}(C_i \cap M_{i-1}^\pm)
\end{tikzcd}
\end{equation}
\noindent
and Proposition~\ref{FP} can therefore be used to identify these kernels. It follows from Proposition~\ref{FP} that under the map induced by  the 
inclusion $p_{i,J} \in M_{i-1}$ the element $s_I/t^{\mu _{i-1} (I)}$ is mapped to zero  if and only if $I \not \subset J$
and the element $\overline s_I/t^{\mu _{i-1} (I)}$ is mapped to zero  if and only if $I \cap J \neq \emptyset.$ Since we have that all $l_{j}$  are positive, 
$I \subset J$ implies $l(I) < l(J)$ .
One therefore gets that $ker \xi ^\pm$ are generated as free  $H^*_{\tilde G_i}(B{\tilde G_i})$-modules 
by  $\{\overline s_J/t^{i-1}; l(J) > 0\}$ and $\{s_J/t^{i-1} ; l(J) > 0\}$ 
respectively. 
Hence, by Theorem~\ref{MV} and Proposition~\ref{HM}, we have
\newline
\smallskip
{$H^*_{\tilde G _{i}}(M_{ {i}}) \cong 
\tilde R\langle s_J/t^{\mu_{i-1}(J)}; J \subset \{1,...,r\}\rangle /
  \tilde R \langle s_J/t^{i-1},\overline s_J/t^{i -1}; l(J) > 0\rangle $}.\\
\noindent	
So Theorem~\ref{HMG_1} holds for $i$.\\
 We next show that Theorem~\ref{HMG_1} for $i$, (and Theorem~\ref{HM_1} for $i-1$) imply Theorem~\ref{HM_1} for $i$ if $i < m$.
We consider the following diagram
\begin{equation}
\begin{tikzcd}
H^*_{\tilde G}(M_{ i-1}) \arrow{r}  
	  \arrow{d} 
	& H^*_{\tilde G}(M_{ i}) \arrow{d}\\
    H^*_{G}(M_{ i-1}) \arrow{r} 
 & H^*_{G}(M_{ i})\\
H^*_{\tilde G_i}(M_{ i-1}) \arrow{r}  
\arrow{u}   
 & H^*_{\tilde G_i}(M_{ i}) \arrow{u}
\end{tikzcd}
\end{equation}
The horizontal maps are induced by $\gamma_i:M_i \longrightarrow M_{i-1}$.
The vertical maps, which can be viewed as 'evaluation at $ t = 0$', are induced by the inclusions $G \subset \tilde G$ and $G \subset  \tilde G _i$, respectively. Since $H^*_{\tilde G}(M_{ i-1}), H^*_{\tilde G}(M_{i})$ and $H^*_{\tilde G_i}(M_{ i-1})$ are free modules over $\tilde R = \F_2[t,t_1,...,t_r]$, the `evaluation at $t=0$' is surjective, i.e.\\
$H^*_{\tilde G}(M_{i-1}) \stackrel{t=0} \longrightarrow H^*_{\tilde G}(M_{i-1})/t H^*_{\tilde G}(M_{i-1}) 
= H^*_{\tilde G}(M_{i-1}) \otimes _{\tilde R} R = H^*_{G}(M_{ i-1})$, etc. for these modules,
while $H^*_{\tilde G_i}(M_{i}) \stackrel {t=0} \longrightarrow H^*_{G}(M_{i})$ factors as \\
$H^*_{\tilde G_i}(M_{i}) \stackrel {t=0}\longrightarrow H^*_{\tilde G_i}(M_{i})/t H^*_{\tilde G _i}(M_{i}) 
\longrightarrow  H^*_G(M_i)$.
Note that 
$H^*_{\tilde G}(M_{ \mu})$ and $H^*_{\tilde G_i}(M_{ \mu})$ are isomorphic for $\mu < i$ since exchanging the coordinates $x_{i,j}$
and $x_{m,j}$ gives an equivariant homeomorphism of the $\tilde G_i$-space $M_{ \mu}$ and the $\tilde G$-space 
$M_{ \mu}$. But this does not hold for $\mu = i$.
In the above diagram elements of the form $(s_J/t^{i-1} ; l(J) > 0) \in H^*_{\tilde G}(M_{i-1})$ are mapped to zero as one goes 
to $H^*_{G}(M_i)$,
since the corresponding elements in $H^*_{\tilde G_i}(M_{i-1})$  go to zero by the above computation 
of $H^*_{\tilde G_i}(M_i)$.
This implies that $s_J/t^{i-1}$ can be divided by $t$ in $H^*_{\tilde G}(M_i)$. Since multiplication with 
$t$ is injective
in $H^*_{\tilde G}(M_i)$ we get a uniquely determined element $s_J/t^{i}$ for all $J$ 
with $l(J)>0$. On the other hand, from the above computation of $H^*_{\tilde G_i}(M_i)$  
one gets that $H^*_{\tilde G_i}(M_i)_{t=0} = H^*_{\tilde G_i}(M_i)/t H^*_{\tilde G_i}(M_i)$  is a free $R$-module generated 
by the elements $\{s_I ; l(I)<0\}$, since $s_J \equiv 0 \equiv \overline s_J$ in $H^*_{\tilde G}(M_i)/t H^*_{\tilde G}(M_i)$ for $l(J) > 0$.
(Note that $\overline s_J/t^{i-1} -s_J/t^{i-1} $  is divisible by $t$ in $H^*_{\tilde G_i}(M_i)$ , since $i<m$.) Hence the images of the elements $\{s_I ; l(I)<0\}$ from $H^*_{\tilde G}(M_i)$
are linearly independent  over $H^*(BG)=R$ in $H^*_{\tilde G_i}(M_i)_{t=0} \subset H^*_{G}(M_i)$. \\
We claim that the 
elements $\{s_I; l(I) < 0\} \sqcup \{s_J/t^i; l(J)> 0 \}$ (in other words $\{s_J/t^{\mu_i(K)};K \subset \{1,...,r\}\}$) freely generate $H^*_{\tilde G}(M_i)$ as $H^*(B\tilde G)$ -module.
Let $x \in H^*_{\tilde G}(M_i)$, then $\gamma_{i !}(x)$  can be written as
\begin{equation}
\gamma _{i!}(x) = \sum_{\{I;l(I)<0\}} \lambda_I s_I + \sum_{\{J;l(J)>0\}} 
\lambda_J s_J/t ^{i-1}
\end{equation} 
with $\lambda_I, \lambda_J \in \tilde R = H^*(B\tilde G)$.\\
So $tx = \gamma ^*_{i} \gamma _{i !}(x) = \sum \lambda_I 
(s_I)  + 
\sum \lambda_J ts_J/t^{i}$ in $H^*_{\tilde G}(M_i).$\\
(Here we use the notation $\gamma ^*_i (s_K) = s_K$ as elements in $H^*_{\tilde G}(M^{\tilde G})$.)\\
Evaluating at $t = 0$ gives 
$\sum \lambda_I \gamma ^*_i (s_I) \equiv 0 ~(mod ~ t)$ in $H^*_G(M_i).$ \\
Since $\{\gamma ^*_i(s_I) \}$ are 
linearly independent in $H^*_G(M_i)$ over $R$, each coefficient $\lambda_I \in \tilde R$ must be 
divisible by $t$, i.e.~$\lambda_I = t \xi_I$ in $\tilde R$, and hence
$tx = \sum t \xi _I 
(s_I)  + 
\sum  t \lambda_J s_J/t ^i$. \\ 
Since division by $t$ is unique in $H^*_{\tilde G}(M_i)$, 
we get  $x = \sum \xi_I s_I  +  \sum \lambda_J s_J/t^i$.
That means that
$\{s_J/t^{\mu_i (K)}; K \subset \{1,...,r\} \}$ generate the subalgebra $H^*_{\tilde G}(M_i)$  of $H^*_{\tilde G}(M^{\tilde G})$,
and these elements are linearly independent over $\tilde R$ = $H^*(B\tilde G)$, since this is clear after localization.
\end{proof} 
In a sense the (big) polygon spaces can be considered as a special case of the (big) chain spaces, if one allows the constant $c$ to also take the value $0$ in the definition of the chain spaces.
We have shown that the assumption ``$\ell $ generic'' implies, in particular, that $0 \in\R$ is a regular value of $g_m:M_{m-1} \longrightarrow \R$. Actually for any regular value $c \in \R$ of $g_m$ we can apply the last step of the proof of Theorem~\ref{HMG_1} to get the following generalization. Note that - due to the equivariant version of the Ehresmann fibration theorem -
the equivariant diffeomorphism type of $N_c$ does not change as $c$ moves in an intervall of only regular values of $g_m$ 
(cf. \cite{Franz:2014}).

\begin{theorem}
\label{HMG_2}
Let  $ \ell = (l_1,...,l_r)$ and  $\tilde \ell = (l_1,...,l_r,c)$ be generic. Then
\begin{equation}
H^*_{\tilde G }(N_c) \cong \\
\tilde R \langle s_K/t^{\mu _{m-1}(K)}; ~K \subset \{1,...,r\} \rangle / (S +\overline S)
\end{equation}
with $S:= \tilde R \langle s_I/t^{\mu_{m-1}(I)}; l(I) > - c \rangle,$ and $\overline S:= \tilde R \langle \overline s_J/t^{\mu_{m-1}(J)}; l(J) > c \rangle$. 							
\end{theorem}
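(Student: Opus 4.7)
The plan is to apply the $\tilde G$-equivariant homeomorphism $N_c \cong N_{-c}$ noted earlier in this section (obtained by negating the coordinates $(x_{m,1},\ldots,x_{m,r})$, which is $\tilde G$-equivariant since $\tilde G$ acts trivially on them) and then carry out the Mayer--Vietoris step from the proof of Theorem~\ref{HMG_1} at level $c':=-c$ instead of at $0$.

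Setting $M^\pm:=\{w\in M_{m-1}:g_m(w)\gtrless c'\}$, so that $M^+\cap M^-$ deformation retracts onto $N_{c'}$, I would first verify the hypotheses of Theorem~\ref{MV} for the $\tilde G$-action just as in the proof of Theorem~\ref{HMG_1}: $M_{m-1}$ is $\it{CEF}$ by Theorem~\ref{HM_1}; $M^\pm$ are $\it{CEF}$ by comparing non-equivariant total Betti numbers with the $\tilde G$-fixed-point counts, using that $g_m$ is $\tilde G$-equivariant (with trivial action on $\R$) and a perfect Morse function with critical set $C_m$; $N_{c'}$ carries no $\tilde G$-fixed point because $g_m(p_{m,J})=l(J)\neq -c$ by the genericity of $\tilde\ell$; and the non-equivariant restrictions are surjective by Morse theory. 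Theorem~\ref{MV} and the short exact sequence it supplies then give
\begin{equation*}
H^*_{\tilde G}(N_c)\;\cong\;H^*_{\tilde G}(N_{c'})\;\cong\;H^*_{\tilde G}(M_{m-1})\big/(\ker\xi^+\oplus\ker\xi^-).
\end{equation*}

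The main work, paralleling the kernel identification step in Theorem~\ref{HMG_1}, is to describe $\ker\xi^\pm$ explicitly via Proposition~\ref{FP}. That proposition gives that $s_K/t^{\mu_{m-1}(K)}$ vanishes at $p_{m,J}$ iff $K\not\subseteq J$, while $\overline s_J/t^{\mu_{m-1}(J)}$ vanishes at $p_{m,J'}$ iff $J\cap J'\neq\emptyset$. Since every $l_j>0$, the quantity $l(\cdot)$ is strictly increasing under subset inclusion, so taking the extremal subset $J^c$ (respectively $I$) the condition for $\overline s_J/t^{\mu_{m-1}(J)}$ to vanish on $(M^+)^{\tilde G}=\{p_{m,J'}:l(J')>c'\}$ reduces to $l(J^c)<c'$, i.e.\ $l(J)>-c'=c$, and the condition for $s_I/t^{\mu_{m-1}(I)}$ to vanish on $(M^-)^{\tilde G}$ reduces to $l(I)>c'=-c$. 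A rank count exactly as in the proof of Theorem~\ref{HMG_1} (using that the bijection $J\leftrightarrow J^c$ equates $|\{J:l(J)>c\}|$ with $|\{J':l(J')<-c\}|$, and similarly on the other side) then shows that these elements freely generate the two kernels, yielding $\ker\xi^+=\overline S$ and $\ker\xi^-=S$; inserting this into the Mayer--Vietoris presentation gives the claimed formula. The principal obstacle I anticipate is the bookkeeping of the two inequalities $l(\cdot)>c$ versus $l(\cdot)>-c$ and the pairing of $s$- versus $\overline s$-generators with the correct kernel; the reduction via $N_c\cong N_{-c}$ is what accounts for the asymmetric placement of $\pm c$ in $S$ and $\overline S$ in the statement.
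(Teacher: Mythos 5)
Your argument is correct and follows exactly the route the paper indicates, namely re-running the Mayer--Vietoris step from the proof of Theorem~\ref{HMG_1} at a nonzero regular level of $g_m$ and identifying the kernels through Proposition~\ref{FP} together with a rank count. Your explicit choice to cut at $-c$ and invoke the $\tilde G$-equivariant homeomorphism $N_c\cong N_{-c}$ is the right bookkeeping to reproduce the stated placement of $\pm c$ in $S$ and $\overline S$: cutting directly at $c$ yields the abstractly isomorphic presentation with $s_j$ and $\overline s_j$ interchanged, whereas cutting at $-c$ gives $\ker\xi^{-}=S$ and $\ker\xi^{+}=\overline S$ literally, which is a point the paper's one-line remark leaves implicit.
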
 
We finally are interested in the equivariant cohomology of $N_c$ with respect to the action of the subgroup $G \subset \tilde G.$
It can be obtained as the middle 
term in an short exact universal coefficient sequence.
\begin{proposition}
\label{HN}
The following sequence is exact and splits.

\begin{equation}
0 \longrightarrow {H_{\tilde G}^{*}(N_c)} \otimes _{\tilde R} R \longrightarrow {H_{G}^{*}(N_c)} 
\longrightarrow Tor^1_{\tilde R} ({H_{\tilde G}^{*}(N_c)}, R)  \longrightarrow  0 
\end{equation}
\end{proposition}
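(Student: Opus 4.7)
My plan is to derive the sequence from the Eilenberg--Moore spectral sequence associated to the Cartesian square
\begin{equation*}
\begin{tikzcd}
(N_c)_G \arrow{r} \arrow{d} & (N_c)_{\tilde G} \arrow{d} \\
BG \arrow{r} & B\tilde G
\end{tikzcd}
\end{equation*}
coming from the inclusion $G \hookrightarrow \tilde G = \Z_2 \times G$. This yields a convergent spectral sequence
\begin{equation*}
E_2^{p,q} = \operatorname{Tor}^{p,q}_{\tilde R}\bigl(H^*_{\tilde G}(N_c),\, R\bigr) \;\Longrightarrow\; H^{p+q}_G(N_c);
\end{equation*}
convergence is unproblematic, since Theorem~\ref{HMG_2} exhibits $H^*_{\tilde G}(N_c)$ as a finitely generated $\tilde R$-module.

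The first main step is to show collapse at $E_2$. Because $R = \tilde R/(t)$ and $t$ is a non-zero-divisor in the polynomial ring $\tilde R = \F_2[t,t_1,\dots,t_r]$, the Koszul complex
\begin{equation*}
0 \longrightarrow \tilde R \xrightarrow{\;\cdot t\;} \tilde R \longrightarrow R \longrightarrow 0
\end{equation*}
is a free resolution of $R$ of projective dimension one. Consequently $\operatorname{Tor}^p_{\tilde R}(M, R) = 0$ for every $\tilde R$-module $M$ and every $p \ge 2$, so the $E_2$-page is concentrated in the two columns $p = 0, 1$; every differential $d_r$ with $r \ge 2$ raises $p$ by at least two and must therefore vanish. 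The resulting two-step filtration on $H^*_G(N_c)$ is precisely the short exact sequence of the proposition, and the same Koszul complex identifies its outer terms as $H^*_{\tilde G}(N_c)/tH^*_{\tilde G}(N_c)$ and $\ker\bigl(t\cdot : H^*_{\tilde G}(N_c) \to H^*_{\tilde G}(N_c)\bigr)$, respectively.

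The subtler point, and the one I expect to be the main obstacle, is the splitting, since universal-coefficient sequences of this shape do not split canonically. My strategy would be to exploit the explicit presentation of $H^*_{\tilde G}(N_c)$ given in Theorem~\ref{HMG_2}: the generators $s_K/t^{\mu_{m-1}(K)}$ together with the relations $(S+\overline{S})$ pin down both the quotient $H^*_{\tilde G}(N_c)/tH^*_{\tilde G}(N_c)$ and the $t$-torsion submodule $\ker(t\cdot)$ as concrete $R$-modules with explicit generating sets. One then picks $R$-linear lifts of the $t$-torsion generators to elements of $H^*_G(N_c)$ and combines them with an $R$-linear section of the reduction-mod-$t$ map to produce an internal direct-sum decomposition of graded $R$-modules. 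If the splitting in the statement is intended only as one of graded $\F_2$-vector spaces, it is automatic.
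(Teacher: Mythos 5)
Your derivation of the short exact sequence via the Eilenberg--Moore spectral sequence for the pullback square is a reasonable variant of the paper's approach, which instead realizes the Mayer--Vietoris free resolution of $H^*_{\tilde G}(N_c)$ over $\tilde R$ by free dg $\tilde R$-modules and applies a universal-coefficient argument for the base change $\tilde R\to R=\tilde R/(t)$; both hinge on $R$ having projective dimension one over $\tilde R$. (You should at least flag that $B\tilde G$ is not simply connected, so the EMSS convergence is not entirely for free; the paper sidesteps this by working directly with dg modules. But this is not the main problem.)

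The real gap is the splitting, which is exactly the point the paper says it could not find in the literature and therefore proves itself (Lemma~\ref{SP}). Your plan --- ``pick $R$-linear lifts of the $t$-torsion generators'' --- does not produce a well-defined $R$-module section of $H^*_G(N_c)\to\operatorname{Tor}^1_{\tilde R}(H^*_{\tilde G}(N_c),R)$, because the Tor module is in general \emph{not} free over $R$. Prescribing images of generators only defines a map on a free cover, and there is no reason it should factor through the relations among those generators. Indeed, in Example~\ref{SYZ} the Tor term contains $\ker\bigl(\delta\colon\Lambda_R^{k+1}\to\Lambda_R^{k}\bigr)$ as a summand, a nontrivial syzygy; if the Tor term were free there would be nothing to discuss about syzygy orders of $H^*_G(N_0)$ in the first place. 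The paper instead constructs a \emph{retraction} of the inclusion $\operatorname{coker}\alpha_*\hookrightarrow H^*_G(N_c)$: in Lemma~\ref{SP} one sets $D=\operatorname{coker}\alpha_*$, applies $\operatorname{Hom}(-,D)$ to the short exact sequence of free cochain complexes, uses freeness of $H^*_{\tilde G}(M_{m-1})$ and of $H^*_{\tilde G}(M_{m-1})/S\oplus H^*_{\tilde G}(M_{m-1})/\overline S$ over $\tilde R$ (hence of their mod-$t$ reductions over $R$) to identify $H^*(A;D)$ and $H^*(B;D)$ with $\operatorname{Hom}$-groups of their homologies, and then reads off from the long exact sequence that $\operatorname{id}_D$ lifts to $H^*(C;D)$. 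That homological step is the essential content, and it is missing from your proposal; the graded-$\F_2$-vector-space splitting you fall back on is not what the rest of the paper needs.
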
  
\vspace{3 mm}
The above Proposition follows from the next Lemma, which is probably well known. Since we could not find a reference in the literature for the splitting of the short exact sequence in the case at hand, we will provide a proof here.
\begin{lemma}
\label{SP}
Let 
\begin{equation}
0 \longrightarrow A \stackrel{\alpha}{\longrightarrow} B
 \longrightarrow C 
\longrightarrow 0
\end{equation}
 be an exact sequence of free differential graded $R$-modules with 
$H_*(A)$ and $H_*(B)$ free over $R$. 
Then the exact sequence 
\begin{equation}                                                  
0 \longrightarrow coker ~\alpha _* {\longrightarrow} H_*(C)
 \longrightarrow ker ~\alpha _* 
\longrightarrow 0
\end{equation}
splits.
\end{lemma}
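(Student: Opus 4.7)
The plan is to reduce to the case where $A$ and $B$ carry the zero differential, where the splitting becomes manifest from the structure of the mapping cone.

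First, I would establish the following chain-level decomposition: any bounded-below free dg $R$-module $M$ whose homology $H_*(M)$ is free over $R$ is isomorphic, as a dg $R$-module, to $H_*(M) \oplus M'$, where $H_*(M)$ carries the zero differential and $M'$ is a direct sum of ``disk'' complexes $P \xrightarrow{\mathrm{id}} P$ and is therefore acyclic. The proof is by induction on degree: the freeness of $H_n(M)$ splits the sequence $Z_n(M) \cong H_n(M) \oplus B_n(M)$, and an inductive argument forces each $B_n(M)$ to be a direct summand of $M_n$ (hence projective), which is precisely what sustains the induction. Applying this to $A$ and $B$ yields chain maps $i_A \colon H_*(A) \to A$, $p_A \colon A \to H_*(A)$ (and analogous $i_B$, $p_B$) that are chain-homotopy inverse, with $H_*(A)$ and $H_*(B)$ viewed as dg modules with zero differential.

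Next, setting $\alpha_* := p_B \alpha \, i_A \colon H_*(A) \to H_*(B)$, the square with top arrow $\alpha$, bottom arrow $\alpha_*$, and vertical arrows $p_A, p_B$ commutes up to chain homotopy, since $p_B \alpha - \alpha_* p_A = p_B \alpha (\mathrm{id} - i_A p_A)$ and $\mathrm{id} - i_A p_A$ is null-homotopic. Consequently the induced map of mapping cones $\mathrm{Cone}(\alpha) \to \mathrm{Cone}(\alpha_*)$ is a chain-homotopy equivalence. On the other hand, because $\alpha$ is injective and the modules are free, the evident projection $\mathrm{Cone}(\alpha) \to C$, $(a,b) \mapsto \beta(b)$, is a quasi-isomorphism (its kernel is the contractible cone on $\mathrm{id}_A$). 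Combining, $H_*(C) \cong H_*(\mathrm{Cone}(\alpha_*))$ as graded $R$-modules.

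Finally, the cone of a map between complexes with zero differential is transparent: $\mathrm{Cone}(\alpha_*)_n = H_{n-1}(A) \oplus H_n(B)$ with differential $(x,y) \mapsto (0, \alpha_*(x))$, so its cycles in degree $n$ are $\ker(\alpha_*|_{H_{n-1}(A)}) \oplus H_n(B)$ and its boundaries are $0 \oplus \mathrm{im}(\alpha_*|_{H_n(A)})$. Hence $H_n(\mathrm{Cone}(\alpha_*)) \cong \ker(\alpha_*|_{H_{n-1}(A)}) \oplus \mathrm{coker}(\alpha_*|_{H_n(A)})$ as an internal direct sum; matching this with the short exact sequence of the statement under the identification $H_*(C) \cong H_*(\mathrm{Cone}(\alpha_*))$ yields the desired splitting. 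The technical heart is the decomposition in the first step: submodules of free modules over the polynomial ring $R$ are in general not projective, so the fact that each boundary submodule $B_n(M)$ \emph{is} a direct summand of $M_n$ is the crux; it rests on the interplay between the freeness of $H_*(M)$ and the bounded-below hypothesis implicit in the cohomological setting of the paper. The subsequent steps are then routine.
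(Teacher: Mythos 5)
Your proof is correct, but it takes a genuinely different route from the paper's. The paper argues cohomologically: with $D := \mathrm{coker}\,\alpha_*$, it applies $\mathrm{Hom}(-,D)$ to the short exact sequence of free dg modules (this stays exact since $C$ is free), invokes the universal coefficient theorem to identify $H^*(\mathrm{Hom}(A,D)) \cong \mathrm{Hom}(H_*(A),D)$ and likewise for $B$ (the $\mathrm{Ext}$-terms vanish because $H_*(A)$, $H_*(B)$ are free), and then reads off from the long exact sequence that $\mathrm{Hom}(H_*(C),D) \to \mathrm{Hom}(D,D)$ is surjective; a preimage of $\mathrm{id}_D$ is a retraction of $D \hookrightarrow H_*(C)$. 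You instead make the formality of $A$ and $B$ explicit --- the structural decomposition $M \cong H_*(M) \oplus (\text{contractible})$ is precisely what makes the $\mathrm{Ext}$-term in the UCT vanish --- and use it up front to replace $\alpha$ by the zero-differential map $\alpha_*$ and read $H_*(C)$ off the mapping cone $\mathrm{Cone}(\alpha_*)$, where the internal direct sum is visible by inspection; compatibility of that sum with the connecting maps follows because the cone comparison map commutes with the canonical fiber-sequence maps $B \to \mathrm{Cone}(\alpha) \to A[1]$. The two proofs rest on the same technical core; you expose it, the paper hides it inside the UCT. Your remark on boundedness is the right point to stress: the upward induction that forces each boundary module $B_n(M)$ to be a direct summand of $M_n$ is what sustains both arguments, and the paper's appeal to a UCT with vanishing $\mathrm{Ext}$ is tacitly relying on the same fact --- it fails for arbitrary unbounded free complexes over a polynomial ring.
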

\begin{proof}
One has a short exact sequence
\begin{equation}                                                  
0 \longrightarrow Hom(C,D) {\longrightarrow} Hom(B,D)
 \longrightarrow Hom(A,D) 
\longrightarrow 0
\end{equation}
where $D:=coker ~\alpha_*$, and a corresponding long exact sequence 
\begin{equation}                                                  
... \rightarrow H^*(C;D) {\rightarrow} H^*(B;D)
 \rightarrow H^*(A;D) 
\rightarrow ...
\end{equation}
Since $H_*(A)$ and $H_*(B)$ are free, we have 
$H^*(A;D) \cong Hom(H_*(A),D)$ and \\ $H^*(B;D) \cong Hom(H_*(B),D)$. 
The map $H^*(C;D) {\rightarrow} H^*(B;D)$  is the composition of the surjection
$H^*(C;D) {\rightarrow} Hom(D,D)$  and the injection \\ $Hom(D,D) \rightarrow
Hom(H_*(B),D)$. 
The first map factors through $Hom(H_*(C),D)$. 
Hence $Hom(H_*(C),D) \rightarrow Hom(D,D)$ is also surjective and therefore\\ $D= coker ~\alpha _*
\rightarrow H_*(C)$ has a splitting.
\end{proof} 
\begin{proof}
Proof of Proposition ~\ref{HN}:
Let 
\begin{equation}                                                  
0 \rightarrow \tilde A \stackrel{\tilde {\alpha}}{\rightarrow} \tilde B
 \rightarrow \tilde C 
\rightarrow 0
\end{equation}
be a short exact sequence of free dg $\tilde R$-modules, which gives the sequence (3.18) in homology.
Then the sequence
\begin{equation}
0 \rightarrow  A \stackrel{ {\alpha}}{\rightarrow} B
 \rightarrow  C 
\rightarrow 0
\end{equation}
obtained from the above sequence by tensoring with $R$ over $\tilde R $, fulfills the 
hypothesis of Lemma 3.12 since $H_*(\tilde A) = H^*_{\tilde G}(M_{m-1})$ and $H_*(\tilde B) = H^*_{\tilde G}(M_{{m-1}})/S \oplus H^*_{\tilde G}(M_{{m-1}})/\overline S$ are free over $\tilde R$, and hence $H_*( A) = H^*_{ G}(M_{m-1})$ and $H_*( B) = H^*_{ G}(M_{{m-1}})/S \oplus H^*_{ G}(M_{{m-1}})/\overline S$ are free 
over $ R$. Therefore, by Lemma 3.12, one has a short exact sequence
\[0 \rightarrow coker ~\alpha _* {\rightarrow} H^*_G(N_c)
\rightarrow ker ~\alpha _* 
\rightarrow 0
\]
which splits; with $ coker ~\alpha _* \cong {H_{\tilde G}^{*}(N_c)} \otimes _{\tilde R} R$ and 
$ \ker ~\alpha _* \cong Tor^1_{\tilde R} ({H_{\tilde G}^{*}(N_c)}, R)$.
\end{proof}
For $i = m$ we can also express the result of Theorem~\ref{HMG_1} in form of the short exact sequences,
 which are free resolutions of the $\tilde R$-module $ H^*_{\tilde G}(N_c)$ (cf.(2.4) or (2.8))
\begin{equation}                                                  
0 \rightarrow H^*_{\tilde G}(M_{{m-1}}) \stackrel{\tilde {\alpha}_*}{\rightarrow} H^*_{\tilde G}(M_{{m-1}})/S 
\oplus H^*_{\tilde G}(M_{{m-1}})/\overline S
 \rightarrow H^*_{\tilde G}(N_c) 
\rightarrow 0
\end{equation}
or
\begin{equation}                                                  
0 \rightarrow \overline S \stackrel{\tilde 
	\iota}{\rightarrow} H^*_{\tilde G}(M_{{m-1}})/S 
 \rightarrow H^*_{\tilde G}(N_c) 
\rightarrow 0
\end{equation}
with $S
$ and $\overline S$ as above.\\
Therefore we can compute the tensor and tor term in (3.11) by just taking the cokernel and the kernel of the map $\tilde {\alpha}_*$ evaluated at $t=0$, i.e.~of $ \alpha := \tilde \alpha_* \otimes_{\tilde R} R$ or of $\iota := \tilde \iota \otimes_{\tilde R} R$.\\
We have the following exact sequence
\begin{equation}
0 \rightarrow ker ~\iota \rightarrow \overline S \otimes _{\tilde R}R  \stackrel \iota \rightarrow (H^*_{\tilde G}(M_{m-1})/S) \otimes_{\tilde R} R \rightarrow  coker ~\iota \rightarrow 0 
\end{equation}
where $\overline S \otimes _{\tilde R}R$ and $(H^*_{\tilde G}(M_{m-1} )/S) \otimes_{\tilde R} R$ are free $R$-modules of 
rank |$\{J; l(J) > c \}$|
and |$\{I;l(I) < -c \} $| = |$\{J; l(J) > c \}$| respectively.
We want to compute the map $\iota$. Since $\overline s_j =s_j + t^{m-1}t_j^n$ one has
$\overline s_J = s_j + \sum _{j\in J} t^{m-1}t_j^ns_{(J\setminus \{j\})} ~~+ ~t^{2(m-1)}(...)$,
so
\begin{equation}
 \tilde \iota (\overline s_J)  ~~~\equiv \sum _{\{j\in J; l({J\setminus \{j\}}) < -c\}} t^{m-1}t_j^ns_{(J\setminus \{j\})} ~~+ t^{2(m-1)}(...) 
 \end{equation}
 in $H^*_{\tilde G}(M_{m-1})/S$ for $l(J) > c > 0$, and 
\begin{equation}	
\iota (\overline s_J/t^{m-1}) ~~~\equiv \sum _{\{j\in J; l({J\setminus \{j\}}) < -c\}} t_j^ns_{(J\setminus \{j\})} 
\end{equation}
in $(H^*_{\tilde G}(M_{m-1})/S) \otimes _{\tilde R} R$. 	
\begin{corollary}
\label{free?}
For $n > 0$ the equivariant cohomology $H^*_G(N_c)$ is a free $R$-module if $c \geq l_j$ for $j=1...,r$, i.e.  if
$ (l_1,...,l_r,c)$  is a dominated length vector in the sense of ~\cite{FHS}.
On the other hand  $H^*_G(N_0)$ is never free (cf.~\cite{Franz:2014}), Lemma 4.3).
\end{corollary}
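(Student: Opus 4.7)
The plan is to use the free resolution~(3.19) together with the splitting from Proposition~\ref{HN} to reduce freeness of $H^*_G(N_c)$ to the vanishing of $\iota = \tilde\iota \otimes_{\tilde R} R$: because Proposition~\ref{HN} gives a split short exact sequence, one has $H^*_G(N_c) \cong \operatorname{coker}\iota \oplus \ker\iota$, so freeness as an $R$-module is equivalent to both summands being $R$-free. The explicit formula~(3.22) for $\iota$ will drive both halves of the corollary.

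For the first assertion, I would argue that $c \geq l_j$ for all $j$ forces $\iota = 0$. The sum in~(3.22) runs over those $j \in J$ with $l(J\setminus\{j\}) = l(J) - 2l_j < -c$, which would require $l(J) < 2l_j - c \leq l_j \leq c$; this contradicts $l(J) > c$, so the indexing set is empty and every $\iota(\overline s_J/t^{m-1})$ vanishes. Both $\overline S$ and $H^*_{\tilde G}(M_{m-1})/S$ are free $\tilde R$-modules, each specified by a subset of the free basis supplied by Theorem~\ref{HM_1}, so their base-changes to $R$ are $R$-free; with $\iota = 0$, the kernel and cokernel appearing in~(3.20) are exactly these free modules, and the splitting then gives $R$-freeness of $H^*_G(N_c)$.

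For the second assertion, assume $c = 0$ and $n > 0$. Write $F' := (H^*_{\tilde G}(M_{m-1})/S) \otimes_{\tilde R} R$ and $\mathfrak{m} := (t_1,\dots,t_r) \subset R$. Formula~(3.22) shows $\operatorname{im}\iota \subseteq \mathfrak{m} F'$, since every summand carries a factor $t_j^n$ with $n \geq 1$. To see $\iota \neq 0$, I would select a minimal long subset $J \subseteq \{1,\dots,r\}$, which exists because $\{1,\dots,r\}$ is itself long at $c=0$ (all $l_j > 0$) and one may successively drop elements as long as longness is preserved until no such element remains. Minimality then forces every $J\setminus\{j\}$ with $j \in J$ to be short, so $\iota(\overline s_J/t^{m-1}) = \sum_{j \in J} t_j^n\, s_{J\setminus\{j\}}$ is a sum of distinct nonzero basis elements of $F'$ and hence nonzero.

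To finish, I would combine $\iota \neq 0$ with $\operatorname{im}\iota \subseteq \mathfrak{m} F'$ to show $\operatorname{coker}\iota$ is not $R$-free: applying $-\otimes_R \F_2$ kills $\iota$, so $\operatorname{coker}\iota$ admits a minimal generating set of size $\operatorname{rank}_R F'$, whereas $\operatorname{rank}_R \operatorname{coker}\iota = \operatorname{rank}_R F' - \operatorname{rank}_R \iota$, with $\operatorname{rank}_R \iota \geq 1$ because $\operatorname{im}\iota$ is a nonzero submodule of the torsion-free $R$-module $F'$. The strict gap rules out freeness of the cokernel, and the splitting transmits non-freeness to $H^*_G(N_0)$. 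I expect the main obstacle to be the combinatorial assurance that $\iota$ is genuinely nonzero; the minimal-long-set construction handles it uniformly once one exploits the automatic longness of $\{1,\dots,r\}$ when $c = 0$.
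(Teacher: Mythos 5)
Your proof is correct, and it takes a genuinely different route from the paper. The paper proves the corollary via the Smith-theoretic freeness criterion $\dim_{\F_2} H^*(N_c) = \dim_{\F_2} H^*(N_c^G)$: it computes $\dim_{\F_2} H^*(N_c)$ by renaming the $y$-coordinates as extra $x$-coordinates (so $n\mapsto 0$, $m\mapsto m+n$, $\tilde G = \Z_2$) and shows via Corollary~\ref{perfect} resp.~a divisibility argument that this is $2^r$ for $c=0$ resp.~$2|\{J : l(J)>c\}|$ for $c>0$; it then computes $\dim_{\F_2} H^*(N_c^G)$ using the $n=0$ model with $\tilde G = \Z_2$ and a \emph{different} $\iota$ (attached to the fixed-point set, not to the original action), getting $2|\{J:l(J)>c\}| - 2\,\mathrm{rk}(\iota)$, and finally compares. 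You instead work directly with the single map $\iota = \tilde\iota\otimes_{\tilde R} R$ for the original $(\Z_2)^r$-action and the splitting $H^*_G(N_c)\cong\operatorname{coker}\iota\oplus\ker\iota$ from Proposition~\ref{HN}, reading everything off formula~(3.22): the inequality $l(J)-2l_j < -c$ is impossible once $c\geq l_j$ for all $j$, so $\iota=0$ and both summands in~(3.20) are free; while at $c=0$ a minimal long set $J$ shows $\iota\neq 0$, and since $n\geq 1$ forces $\operatorname{im}\iota\subseteq(t_1,\dots,t_r)F'$, the graded Nakayama count (minimal generators equal $\operatorname{rank} F'$, rank of cokernel is strictly smaller) rules out freeness. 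Both arguments hinge on the same combinatorial observation that a long $J$ with some short $J\setminus\{j\}$ exists; your version is self-contained within the module-theoretic setup and avoids the two auxiliary coordinate-renaming computations, whereas the paper's route has the side benefit of producing the explicit dimension formula $\dim_{\F_2} H^*(N_0^G) = 2^r - 2\,\mathrm{rk}(\iota)$. One small caveat worth noting: formula~(3.21) is stated in the paper for $l(J)>c>0$, so for the $c=0$ case you are implicitly extending~(3.22) to $c=0$; this is legitimate (the derivation from $\overline s_j = s_j + t^{m-1}t_j^n$ and the description of $S$ goes through verbatim), but it deserves a sentence of justification.
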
	
\begin{proof}
We use the criterion, established by Smith theory, that  $H^*_G(N_c)$ is free if and only if
 $dim_{\F_2} H^*(N_c) = dim_{\F_2} H^*(N_c^G)$. Assume that $c=0$.
Since $n>0$ we can rename the variables $y_{n,1},...,y_{n,r}$ as $x_{m+1,1},...,x_{m+1,r}$, thereby replacing $m$ by $m+1$, and $n$ by $n-1$. We can now apply Cor.~\ref{perfect} in the new setting to obtain
that $dim_{\F_2}H^*(M_m) = 2^r$ (cf.~\cite{Franz:2014}, Prop.~3.3).
On the other hand one can also use Theorem~\ref{HM_1}, Theorem~\ref{HMG_1} and Proposition~\ref{HN} to compute $H^*_{\Z_2}(N_0^G)$ and 
$H^*(N_0^G)$. This corresponds to the case $n=0$. One obtains from the sequence (3.20) that $dim_{\F_2}H^*(N_0^G) = 2^r - 2rk(\iota) < 2^r$, because there always exists a $J$ with $l(J) > 0$
such that $ l(J \setminus \{j\}) < 0$ for some $j \in J$.  Therefore $\iota $ is not trivial. Hence  $H^*_G(N_0)$ is never free.
Also for $c \neq 0$ one can use the above calculation to get $dim_{\F_2}H^*(N_c^G) = 2|\{J; l(J) > c \}| - 2rk(\iota)$, but $\iota $ is trivial here if  $c\geq l_j$ for $j=1,...r$. 
 To calculate $H^*(N_c)$ we rename the variables
$(y_{\nu,1},...,y_{\nu,r})$ as $x_{m+1,1},...,x_{m+1,r}$ for $\nu = 1,..,n$ thus replacing $m$ by $m+n$ and $n$ by $0$. The group $G$ in this new setting 
is just $\{1\}$  and $\tilde G = \Z_2$.
So $\overline s_j  = s_j + t^{m+n-1}$ in this setting. But the number of hyperplanes intersecting the product of spheres is $m$. And therefore the appropriately
modified Remark 3.10 gives 
 \begin{equation}
 H^*_{\Z_2 }(N_c) \cong \\
 \tilde R\langle s_K/t^{\mu _{m-1}(K)}; ~K \subset \{1,...,r\} \rangle / (S +\overline S)
 \end{equation}
 with $S:= \tilde R \langle s_I/t^{\mu_{m-1}(I)}; l(I) > - c\rangle ,$ and $\overline S:= \tilde R \langle \overline s_J/t^{\mu_{m-1}(J)}; l(J) > c\rangle $.
 But $ \iota $ turns out to be trivial in this case since
 $\overline s_J/t^
 {m-1} - s_J/t^{m-1} \equiv 0 ~mod ~t$. So $dim_{\F_2} H^*(N_c) = dim_{\F_2}  ker~\iota + dim_{\F_2}  coker~\iota  =  2|\{J; l(J) > c \}|$ (see (3.20)).
 \end{proof}

Examples below show (see Example~\ref{bcs} (3)), that  $H^*_G(N_c)$ can be free for $c \neq 0$  even if $ (l_1,...,l_r,c)$  is not a dominated length vector.
Due to the equivariant version of the Ehresmann fibration theorem varying the constant $c$ between two adjacent critical values of $g_m$ does not change the equivariant  diffeomorphism type of $N_c$. So $N_c \cong N_0$ if $ 0 < c < cr_{min}$ where
$cr_{min}$ denotes the minimal positive critical value of $g_m$.
We extend the analogue of ~\cite{Franz:2014}, Cor.6.4 to the situation where $c$ is not necessarily equal to $0$. Two lenght vectors are considered equivalent, if they induce the same notion of of 'long' and 'short' index sets.

\begin{proposition}
\label{maximal}

(1) If $r = 2k+1$ then $H^*_G(N_c)$ has syzygy order k if an only if  $ (l_1,...,l_r)$  is equivalent to $(1,...,1)$ and $0 < c < cr_{min}$.

(2) If $r = (2k+2)$ then  $H^*_G(N_c)$ has syzygy order k if an only if  $ (l_1,...,l_r)$  is equivalent to $(0,1,...,1)$ and $0 < c < cr_{min}$.

\end{proposition}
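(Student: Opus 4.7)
The plan is to exploit the explicit free resolution~(3.23) together with the splitting of Proposition~\ref{HN}: as $R$-modules,
\[
H^*_G(N_c) \cong \operatorname{coker}(\iota) \oplus \ker(\iota),
\]
where $\iota$ is the map described in~(3.25). In the monomial basis of Theorem~\ref{HMG_2}, the matrix of $\iota$ has entries in $\{0,t_1^n,\ldots,t_r^n\}$, with the zero pattern determined solely by the combinatorics of long and short subsets of $\{1,\ldots,r\}$. Hence the syzygy order of $H^*_G(N_c)$ depends only on the equivalence class of $(l_1,\ldots,l_r,c)$, and since a direct sum of $R$-modules is a $j$-th syzygy if and only if both summands are, it suffices to control the depth of $\operatorname{coker}(\iota)$ and $\ker(\iota)$ separately.

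Next I would invoke Proposition~\ref{thm:torsion-free-hd-1}, which forces freeness of $H^*_G(N_c)$ whenever its syzygy order is $\geq r/2$. Under the hypothesis $0<c<cr_{min}$ one has $c<l_j$ for every $j$, so the length vector $(l_1,\ldots,l_r,c)$ is not dominated and Corollary~\ref{free?} excludes freeness. Thus the maximal non-free syzygy order available is exactly $k$ in both $r=2k+1$ and $r=2k+2$, and the task reduces to characterizing when this maximum is attained.

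For the \emph{sufficiency} direction I would specialize to $\ell=(1,\ldots,1)$ (respectively the limiting class of $(\varepsilon,1,\ldots,1)$ as $\varepsilon\to 0$) with $0<c<1=cr_{min}$. In the odd case $r=2k+1$ the long sets are precisely the subsets of cardinality $\geq k+1$, and the nonzero pattern of $\iota$ in~(3.25) reproduces the middle differential of the Koszul complex on the regular sequence $t_1^n,\ldots,t_r^n\in\tilde R$. A depth count against this Koszul complex then identifies both $\operatorname{coker}(\iota)$ and $\ker(\iota)$ as $k$-th syzygies that fail to be $(k+1)$-th. In the even case $r=2k+2$, the vanishing of $l_1$ pairs each long set $J'\subset\{2,\ldots,r\}$ with $J'\cup\{1\}$ and makes $\iota$ decompose as a direct sum of two copies of the Koszul-type map from the odd case; the same depth conclusion follows.

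For the \emph{necessity} direction I would show that if $(l_1,\ldots,l_r)$ is not equivalent to the specified vector then long sets of small cardinality appear. Using~(3.25), any long set $J$ with $|J|$ below the critical threshold produces a column of $\iota$ supported on fewer than $k+1$ of the generators $t_j^n s_{J\setminus\{j\}}$, which exhibits a direct factor of $\operatorname{coker}(\iota)$ of depth strictly less than $k$. This combinatorial case analysis is the \textbf{main obstacle}; it parallels \cite[Cor.~6.4]{Franz:2014}, and since the underlying combinatorics of long/short subsets is identical to that setting, the argument transfers almost verbatim, with the $\F_2$-linear algebra being in fact more uniform than its rational counterpart.
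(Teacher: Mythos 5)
Your overall strategy---invoking Proposition~\ref{thm:torsion-free-hd-1} for the bound $k$ on the non-free syzygy order, using the Koszul structure of $\iota$ to get sufficiency, and deferring the necessity to the combinatorics of long/short sets following \cite{Franz:2014}---matches the paper's, which is indeed a modification of the proof of Prop.~6.4 in \cite{Franz:2014}. However, two of your steps do not hold up as written.

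First, Corollary~\ref{free?} does not ``exclude freeness'': it is a one-directional statement, giving freeness \emph{when} $(l_1,\ldots,l_r,c)$ is dominated, but the converse fails (this is exactly the point of Example~\ref{bcs}(3), where $H^*_G(N_c)$ is free even though $c$ does not dominate $\ell$). Your inference ``$0<c<cr_{min} \Rightarrow c<l_j$ for all $j$'' is itself false for general length vectors (take $\ell=(1,1,10)$, where $cr_{min}=8>l_1$), and even in the cases where it holds, non-domination does not imply non-freeness. Luckily this is harmless: in both directions of the proposition ``syzygy order exactly $k$'' with $k<r$ already entails non-freeness, so the appeal to Corollary~\ref{free?} should simply be dropped rather than repaired.

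Second, and more importantly, the necessity direction is not carried through. The step from ``a column of $\iota$ supported on fewer than $k+1$ generators'' to ``a direct factor of $\operatorname{coker}(\iota)$ of depth strictly less than $k$'' is not substantiated: a sparse column in a matrix does not by itself produce a direct summand of the cokernel, and the relevant invariant is not the cardinality of the long set $J$ but the \emph{number of $j\in J$ with $J\setminus\{j\}$ short}, which is exactly the criterion from Franz's Prop.~6.3 that the paper cites. The paper's actual argument for necessity is a careful combinatorial chase (in the odd case): starting from a $J\in\mathcal L'_c$ with at least $k+1$ ``good'' indices $j$, one compares $|J|$ with $|\overline J|$, notes that $\overline J\cup\{j\}\in\mathcal L'_c$ as well, concludes $|J|=k+1$ exactly, and then uses single-index swaps to propagate to \emph{all} $(k+1)$-subsets, forcing $\mathcal L_c$ to consist of all sets of cardinality $\geq k+1$; the condition $c<cr_{min}$ then falls out because some $K$ must satisfy $l(K)=cr_{min}>c$. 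The even case requires a further case split on whether $|J|=k+1$ or $|J|=k+2$ and an ordering argument to pin down the distinguished ``zero'' coordinate. None of this is reproduced by the claim that ``the argument transfers almost verbatim''; it is precisely the content of the proof.
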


\begin{proof}
The proof is a modification of the proof of ~\cite{Franz:2014}, Prop.6.4.

Let $ \mathcal  L_c $: = $\{J; l(J) > c \} $
and $ \mathcal S_c $: = $\{I; l(I) < -c\}.$ The map which assigns to a subset $J \subset \{1,...,r\}$ its complement $\overline J$ gives a bijection between  $ \mathcal  L_c $  and $ \mathcal  S_c $ . Let $\mathcal L'_c$  be the subset
 of  $ \mathcal  L_c $ = $\{J; l(J) > c \} $,
such that there exits an $j \in J$ with $(J \backslash \{j\}) \in    \mathcal  S_c $: = $\{J; l(J) < - c \} $.\\
(1) Assume $r=2k+1$ and that the syzygy order of  $H^*_G(N_c)$  is k.  Since $H^*_G(N_c)$ is not free,  $ \mathcal  L'_c $ can not be empty;
and since the syzygy order is $k$,  any index set $J \in   \mathcal  L'_c $ contains at least $k+1$ indices $j$, such that  $(J \backslash \{j\}) \in    \mathcal  S_c $   (cf.~\cite{Franz:2014}, Prop.6.3). In particular  any  set $J \in   \mathcal  L'_c $ must have at least $k+1$ elements.  The complement $\overline J$  of $J$ has at most m elements and lies in  $ \mathcal  S_c $ . But $\overline J  \cup \{j\}$ is in  $ \mathcal  L'_c $, since its complement
 $J \backslash \{j\}$ is in  $ \mathcal  S_c $. So $\overline J  \cup \{j\}$ and also $J$  must have precisely $k+1$ elements for a sets $J \in \mathcal  L'_c. $  Removing an element $j$ from a set  $J$ as above and replacing it by an element $i$  from $\overline J$ gives again a set in  $ \mathcal  L'_c .$ 
It therefore follows, that the sets in  $ \mathcal  L'_c $  are all those having precisely $k+1$ elements and the
 sets in $ \mathcal  L_c $ all those which have at least $k+1$ elements.This means that sets with less than $k+1$ elements are in  $ \mathcal S_c $.
All together  $\mathcal  L_c $,  resp. $ \mathcal  S_c $, coincide with the  long, resp. short, subsets  for the length vector $(1,...,1)$,
and $c < cr_{min}$   since there must be an index set $K$ with $l(K) = cr_{min}.$ This proves part (1).\\
(2) We assume again that the syzygy order of $H^*_G(N_c)$ is $k$, but this time $r = 2k+2$. We define  $ \mathcal  L_c $  and  $ \mathcal  S_c $  and  $ \mathcal  L'_c $  as before. We may assume that $ l_1 < l _2  \leq l_3 ...\leq l_r$   because the equivariant diffeomorphism type of $N_c$ does not change under small enough perturbations of the $(l_1,...l_r)$ and of $c$. Again $  \mathcal  L'_c $ is not empty, and for any set $J \in   \mathcal  L'_c $ , one has $J \backslash \{j\} \in   \mathcal  S_c $  for at least $k+1$ elements in $J$. Arguing similarly to case (1) one gets that either both $J$ and $\overline J$ have precisely $k+1$ elements or $J$ has $ k+2$ elements and therefore $\overline J$ has $k$ elements. If $J \in  \mathcal  L'_c $  then  $\overline {J \backslash \{j\}} = \overline J \cup j$ is also in  $ \mathcal  L'_c $ for at least $k+1$ elements  $j \in J$. So  $ \mathcal  L'_c $ must contain index sets with $k+2$ elements and also index sets with $k+1$ elements. 
Assume $J \in   \mathcal  L'_c $  contains $1$ and has precisely $k+1$ elements. Then  $\overline {J \backslash \{j\}} = \overline J \cup j$ is also in  $ \mathcal  L'_c $ for all elements  $j \in J$. Also  $I:=\overline {J \backslash \{j\}} = \overline J \cup j$ contains at least $m+1$ elements  
$i$ , such that  $\overline {I \backslash \{i\}} = J \backslash \{j\} \cup \{i\}$ is also in  $ \mathcal  L'_c $. Listing the indices  occuring
 in $J$, resp. $\overline J$ weakly  increasing one gets two sequences $1, a_2,...a_{k+1}$ resp. $b_1,...,b_{k+1}$. The above argument shows that one can replace $a_\nu$ by $b_\nu $ if  $a_\nu > b_\nu$ for $\nu = 2,...,{m+1}.$  For the index sets, $K$ 
and $\overline K$, obtained this way one still has $1 \in K$ and $K \in \mathcal L'.$  But $l(K) < l(\overline K)$, which is impossible,
 since $l(K) > c.$  So there can't be index sets $J \in \mathcal L'_c$ containing $1$ and precisely $ {k+1}$ elements.
Assume $J \in   \mathcal  L'_c $  contains $1$ and has $k+2$ elements. Similar to the above reasoning one sees that replacing an index $j \neq 1$ in $J$  by an index $i$ in $\overline J$ gives again an index set in  $ \mathcal  L'_c .$ 
It follows that all index sets, which contain $1$ and have precisely $k+2$ elements are in  $ \mathcal  L'_c $. So   $\mathcal  L_c $,  resp. $ \mathcal  S_c $, coincide with the  long, resp. short, subsets  for the length vector $(0,1,...,1)$.
As above one sees that $c < cr_{min}.$ 
\end{proof}
\begin{remark}
\label{n=0}
(1) We would like to point out that Theorem~\ref{HM_1} and Theorem~\ref{HMG_1} give complete information about the product structure
of the equivariant cohomology with respect to the $\tilde G _i$-actions, but Proposition~\ref{HN} gives only partial information about the product in $H^*_G(N_c)$. \\
(2) Theorem~\ref{HM_1}, Theorem~\ref{HMG_1} and Proposition~\ref{HN} can be applied if $n = 0$
and $G = \{1\}$ (see e.g. the proof of Cor.~\ref{free?}). As a special case one obtains the $\Z_2$-equivariant and the non-equivariant cohomology of the spaces in
Remark~\ref{spaces},(2) -(4), which are studied in several papers, see e,g, ~\cite{FS},~\cite{FHS}, ~\cite{FF}, ~\cite{Hausmann:book}. In case $m>2, n=0$ and $c=0$ the term ${H_{\tilde G}^{*}(N_0)} \otimes _{\tilde R} R = {H_{\Z_2}^{*}(E_m(\ell))} \otimes _{\F_2 [t]} \F_2$, including the multiplicative structure, is 
just $H^{(m-1)*} (E_m(\ell);\F_2)$ studied in detail in ~\cite{FF}, Section 4.
One gets that
$dim_{\F_2} H^*(E_m(\ell))$ is smaller than $2^r$. A sharp upper bound is contained in ~\cite{FS}, Thm. 2].
\\
(3) Since, for $i = 1,...,m-1$, the spaces $M_{ i}$ are $\it{CEF}$ with respect to the $\tilde G$-action,
one obtains the equivariant cohomology with respect to subgroups $G' \subset \tilde G$, and in particular the 
non-equivariant cohomology (for the trivial subgroup $\{1\}$), just as the tensor product 
$H^*_{\tilde G}(M_{i}) \otimes _{H^*(B \tilde G)} H^*(BG').$
\end{remark}
While $H^*_{G'}(M_i)$ is free over $H^*(BG')$ for any $G' \subset \tilde G _i$ and $i=1,...,m-1$, the equivariant cohomology $H^*_{\tilde G}(M_m)$ is torsion, which already follows from the fact that $(M_m)^{\tilde G} = \emptyset$.
As mentioned above the equivariant cohomology $H^*_G (M_m)$, $(M_m=N_0)$, with respect to $G$ is not free, but it is often torsion-free over $R$ 
(see ~\cite{Franz:2014}, Section 5 and 6). 
We will not perform explicit calculations here for the general case. For the ``big polygon spaces'' (and the corresponding 
${(S^1)}^r$-action in the complex situation) these are done and discussed in ~\cite{Franz:2014}. This could be imitated in the real case at hand in a similar vein. 
But the following example (cf. ~\cite{Franz:2014}, Prop.~5.1) gives - from the view point of syzygies - perhaps the most interesting special 
 case and already shows some typical features of the general case for the "big polygon spaces". Later on we discuss some examples of "big chain spaces".
\begin{example}
\label{SYZ}
We assume $m=2, n=1,
r = 2k + 1, k \geq 0$ and $\ell:= (1,...,1)$.
Under this assumptions one has $l(J) > 0$ if and only if $|J| > k$.
The map $\iota$ defined above turns out in this case to be trivial on $\overline s_J/t$ for $l(J) > k+1$. For $l(J) = k+1$ it coincides with the following boundary map
in the Koszul complex $\delta:\Lambda^{k+1}_R(\sigma_1,...,\sigma_r) \rightarrow \Lambda^k_R(\sigma_1,...,\sigma_r)$
if one puts $R \langle \overline s_J/t; l(J) = k+1 \rangle \cong \Lambda^{k+1}_R(\sigma_1,...,\sigma_r)$
and $R \langle s_I; l(I) = k \rangle \cong \Lambda^{k}_R(\sigma_1,...,\sigma_r)$ .
We therefore get
\begin{equation}
coker ~\iota \cong 
R \langle s_I, |I| < k \rangle \oplus 
coker~(\delta:\Lambda^{k+1}_R(\sigma _1,...,\sigma _r) \rightarrow \Lambda^k_R(\sigma_1,...,\sigma_r)) 
\end{equation}
and
\begin{equation}
ker ~\iota \cong 
R\langle s_J/t, |J| > k+1\rangle \oplus 
ker ~(\delta:\Lambda^{k+1}_R(\sigma _1,...,\sigma _r) \rightarrow \Lambda^k_R(\sigma _1,...,\sigma _r))
\end{equation}
Finally, from the short exact Künneth sequence, which splits, we get
$H^*_G(N_0) \cong coker ~\iota \oplus ker ~\iota $ as $R$-modules.
In particular, since $coker~(\delta:\Lambda_R^{k+1} \rightarrow \Lambda_R^{k})$
is a k-th, but not a $(k+1)$-th syzygy  and  $ker~(\delta:\Lambda_R^{k+1} \rightarrow \Lambda_R^{k})$ is a (k+2)-th syzygy, 
we get the following result. 
\end{example}
For Example ~\ref{SYZ} one has:
\begin{corollary}
\label{SY}
The equivariant cohomology $H^*_G (N_0)$ is a k-th syzygy, but not a $(k+1)$-th
syzygy.
\end{corollary}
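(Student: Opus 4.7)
The plan is to read off the conclusion directly from the explicit decomposition of $H^*_G(N_0)$ supplied by Example~\ref{SYZ}. That example exhibits an isomorphism $H^*_G(N_0) \cong \operatorname{coker}\iota \oplus \ker\iota$ in which each summand further splits as a free $R$-module plus a piece of the Koszul complex~\eqref{eq:koszul-resolution}. First I would match the differential $\delta\colon \Lambda^{k+1}_R(\sigma_1,\dots,\sigma_r) \to \Lambda^k_R(\sigma_1,\dots,\sigma_r)$ of the example with the map $\delta_{k+1}$ of~\eqref{eq:koszul-resolution}; exactness of the Koszul complex in positive degrees gives $\ker\delta \cong K_{k+2}$ and $\operatorname{coker}\delta \cong \Lambda^k/\ker\delta_k \cong K_k$.

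Recall (from the paragraph after~\eqref{eq:koszul-resolution}) that $K_j$ is a $j$-th syzygy with $\operatorname{hdim}_R(K_j)=r-j$, and that free modules are $j$-th syzygies for every $j$. I would next record the elementary closure property that a direct sum of a $j_1$-th syzygy $A$ and a $j_2$-th syzygy $B$ with $j_2\ge j_1$ is again a $j_1$-th syzygy: simply truncate each exhibiting sequence to length $j_1$ and take the termwise direct sum. Applying this to $\operatorname{coker}\iota$ (free plus $K_k$) and to $\ker\iota$ (free plus $K_{k+2}$) yields that $H^*_G(N_0)$ is a $k$-th syzygy, settling the positive half.

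For the negative half I would argue via homological dimension, using again the bound from the paragraph after~\eqref{eq:koszul-resolution}: if $H^*_G(N_0)$ were a $(k+1)$-th syzygy, its homological dimension over $R$ would be at most $r-k-1$. But $\operatorname{hdim}_R$ is the maximum of the summands' homological dimensions, and $K_k$ sits as a direct summand of $\operatorname{coker}\iota$, hence of $H^*_G(N_0)$; since $\operatorname{hdim}_R(K_k)=r-k$, we get $\operatorname{hdim}_R(H^*_G(N_0))\ge r-k>r-k-1$, a contradiction.

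There is really no hard step: the substantial work sits in Example~\ref{SYZ}, where the map $\iota$ is explicitly identified with a Koszul differential. Once that identification is in hand, Corollary~\ref{SY} is bookkeeping against the Koszul resolution. The one point meriting care is keeping the indices straight, so that $\operatorname{coker}\delta$ is really the Koszul syzygy $K_k$ (with homological dimension exactly $r-k$) and not a shifted neighbour; everything else is a direct application of the syzygy and homological dimension formalism already set up in Section~2.
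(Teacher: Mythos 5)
Your argument is correct and follows the same route the paper intends: Corollary~\ref{SY} is read off from the decomposition in Example~\ref{SYZ}, using that $\operatorname{coker}\delta\cong K_k$ is a $k$-th but not $(k+1)$-th syzygy, $\ker\delta\cong K_{k+2}$ is a $(k+2)$-th syzygy, and the remaining summands are free. The paper states this in one sentence and leaves implicit the two facts you spell out (closure of $j$-th syzygies under direct sums with higher-order syzygies, and that $\operatorname{hdim}_R(K_k)=r-k$ obstructs the $(k+1)$-th syzygy property through the direct summand $K_k$); your version is the same proof with those routine steps made explicit.
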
 
\begin{remark}
\label{SO}
The above Corollary~\ref{SY} shows that the maximal bound (namely $k$) for the syzygy order (in the non-free case) given by 
Proposition~\ref{thm:torsion-free-hd-1} for an action of $(\Z/2)^{2k+1}$ can be realized by the equivariant cohomology of a compact manifold. It is pointed out in \cite{Franz:2014},(5.2) that, using such ``maximal'' examples, one can easily realize all other orders of syzygies allowed
by Proposition~\ref{thm:torsion-free-hd-1}. One can just extend the action to a larger rank torus letting the extra coordinates act trivially.
This obviously changes the rank of the torus, but it does not change the syzygy order of the equivariant cohomology. 
It corresponds to extending the length vector by another coordinate equal to $0$, and it it is easy to check that this does not  change the syzygy order but increases the rank of the torus acting. 
In \cite{Franz:2014}
there is a careful discussion of the effect of different length vectors on the syzygy order in case of ${(S^1)}^r$-actions.
This could as well be imitated for the $(\Z_2)^r$-manifolds considered here. 
\end{remark}

We finish with a few examples of "big chain spaces", a class of spaces which is not considered in  \cite{Franz:2014}. 
They show that the syzygy order of $H^*_G(N_c)$ depends in a rather delicate way on the length vector $\ell$ and the constant $c$. But using 
Theorem ~\ref{HMG_2} and Proposition ~\ref{HN} the calculation of the equivariant cohomology is straight foreword and we leave the details to the reader. Again one can change the rank of the torus acting without changing the syzygy order by adding coordinates  equal to $0$ to the length vector. In particular one obtains corresponding examples for tori of even rank this way.

\begin{example}
\label{bcs}
(1) Let $r=2k+1$ and  $ \ell = (1,...,1).$   The critical values of $g_m$ are  $\{ -r, -(r-2),...,-1,1,...,(r-2), r\}$. Recall that for a regular value, $c$ of $g_m$, one has $H^*_G(N_c) \cong H^*_G(N_{-c}.$  Let $0 \leq c$ be a regular value of $g_m$, then\\

 $H^*_G(N_c )   \left \{
 \begin{array}{ccc}
$has syzygy order k$
 &  $if$ & 0 \leq c <1\\
$is free$  &  $ if $ & c >1 \\
\end{array}\right.$\\

\noindent This does not mean that $ H^*_G(N_c)$ is the same for all $c > 1$ . The rank decreases as $c$ increases, crossing  critical values of $g_m$; in particular $H^*_G(N_c)  =  0 $ if $ c > r$.\\

\noindent (2) Let $r=2k+1$,  $\ell = (2,2,3,...,3)$ and $c$ a regular value of $g_m$, then \\

$H^*_G(N_c )  \left \{
 \begin{array}{ccc}
$has syzygy order k$
 &  $if$ & 0 \leq c <1\\
$ has syzygy order (k-1)$ & $if$ & 1 < c < 3\\
$is free of decreasing rank $  &  $ if $ & c >3 \\
\end{array}\right.$\\

\noindent (3) Let $\ell = (2,2,2,3)$, then \\

$H^*_G(N_c)   \left\{
\begin{array}{ccc}
$has syzygy order 0$ & $if$ & 0 \leq c < 1\\
$ is free$ & $if$ & c > 1\\
\end{array} \right.$\\
\end{example}

\noindent Note that for the first two cases in Example \ref{bcs} one gets the same result for $c = 0,$ i.e.~for the big polygon spaces, but not for all values of $c,$ i.e.~not for all big chain spaces.
In case (3) one gets freeness of the equivariant cohomology even for constants $c$ which do not 
dominate $\ell$ (cf. Cor.~\ref{free?}).

\end{document}